\newtheorem{theorem}{Theorem}[section]
\newtheorem{lemma}[theorem]{Lemma}
\newtheorem{proposition}[theorem]{Proposition}
\theoremstyle{definition}
\newtheorem{example}[theorem]{Example}
\newtheorem{question}[theorem]{Question}
\theoremstyle{remark}
\newtheorem{remark}[theorem]{Remark}
\numberwithin{equation}{section}
\newcommand{\Gal}{\operatorname{Gal}}
\begin{document}

\title{Linear system of hypersurfaces passing through a Galois orbit}

\author{Shamil Asgarli}
\address{Department of Mathematics and Computer Science \\ Santa Clara University \\ 500 El Camino Real \\ USA 95053}
\email{sasgarli@scu.edu}

\author{Dragos Ghioca}
\address{Department of Mathematics, University of British Columbia, Vancouver, BC V6T 1Z2}
\email{dghioca@math.ubc.ca}

\author{Zinovy Reichstein}
\address{Department of Mathematics, University of British Columbia, Vancouver, BC V6T 1Z2}
\email{reichst@math.ubc.ca}

\subjclass[2020]{Primary 14N05; Secondary 14J70, 14G15}
\keywords{linear system, hypersurface, finite fields, irreducibility}

\begin{abstract}
Let $d$ and $n$ be positive integers, and $E/F$ be a separable field extension of degree $m=\binom{n+d}{n}$. We show that if $|F| > 2$, then
there exists a point $P\in \mathbb{P}^n(E)$ which does not lie on any degree $d$ hypersurface defined over $F$. In other words, the $m$ Galois
conjugates of $P$ impose independent conditions on the $m$-dimensional $F$-vector space of degree $d$ forms in $x_0, x_1, \ldots, x_n$. As an application, we determine the maximal dimensions of linear systems $\mathcal{L}_1$ and $\mathcal{L}_2$  of hypersurfaces in $\mathbb P^n$ over a finite field $F$, where every $F$-member of $\mathcal{L}_1$ is reducible and every $F$-member of $\mathcal{L}_2$ is irreducible.
\end{abstract}

\maketitle

\section{Introduction}\label{sect:intro}
Consider the vector space $V$ of all degree $d$ homogeneous forms in $n+1$ variables with coefficients in a field $F$. An elementary counting argument shows that \[  \dim(V) = \binom{n+d}{n} .\] 
Let us denote this number by $m$. An $F$-point of $\mathbb{P}(V)$ can be identified with a projective hypersurface in $\mathbb{P}^n$ defined over $F$. It is well known that if $F$ is an infinite field, $l$ points of $\mathbb{P}^n(F)$ in general position impose linearly independent conditions on hypersurfaces of degree $d$, provided that $l \leqslant m$; cf.~Lemma~\ref{lem.split-case}.
In particular, for points $P_1, \ldots, P_m$ of $\mathbb{P}^n$ in general position, no hypersurface of degree $d$ passes through all of them. 

Suppose $F$ is an arbitrary field (possibly finite) and $E/F$ is a separable field extension of degree $m$. 
Can we choose $P \in \mathbb{P}^n(E)$ so that the $m$ Galois conjugates of $P$ impose independent conditions on degree $d$ hypersurfaces in $\mathbb{P}^n$? In other words, is there always a $P \in \mathbb{P}^n(E)$ which does not lie on any 
degree $d$ hypersurface defined over $F$? Our main result gives an affirmative answer to this question under a mild restriction on $F$.

\begin{theorem}\label{thm:main}
Let $d$ and $n$ be positive integers, and
$E/F$ be a separable field extension of degree $m  \colonequals \binom{n+d}{n}$. Assume that $|F|>2$. Then there exists a point $P\in\mathbb{P}^n(E)$ such that $P$ does not lie on any hypersurface of degree $d$ defined over $F$.
\end{theorem}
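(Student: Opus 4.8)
The plan is to recast Theorem~\ref{thm:main} as a statement in linear algebra and then produce the required point almost by hand, treating infinite and finite base fields by different means. Let $M_1,\dots,M_m$ be the basis of $V$ consisting of all degree $d$ monomials in $x_0,\dots,x_n$. For a nonzero vector $a=(a_0,\dots,a_n)\in E^{n+1}$ representing a point $P\in\mathbb{P}^n(E)$, a form $g\in V$ vanishes at $P$ exactly when $g(a)=0$, so $P$ lies on no degree $d$ hypersurface defined over $F$ if and only if the $F$-linear evaluation map $\mathrm{ev}_a\colon V\to E$, $g\mapsto g(a)$, is injective; since $\dim_F V=m=[E:F]$ this means $\mathrm{ev}_a$ is an isomorphism, i.e.\ the $m$ monomial values $a^\alpha$ with $|\alpha|=d$ form an $F$-basis of $E$. (A good $a$ necessarily has all coordinates nonzero: if $a_i=0$ then every monomial divisible by $x_i$ dies and fewer than $m$ monomials survive.) Dehomogenizing by $a_0=1$, the theorem becomes: there exist $b_1,\dots,b_n\in E$ such that the monomials $b_1^{\beta_1}\cdots b_n^{\beta_n}$ over all $\beta\in\mathbb{Z}_{\ge 0}^n$ with $\beta_1+\dots+\beta_n\le d$ form an $F$-basis of $E$; equivalently, $(b_1,\dots,b_n)\in\mathbb{A}^n(E)$ lies on no affine hypersurface of degree $\le d$ over $F$. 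For $n=1$ this is immediate and needs no hypothesis on $F$: by separability $E=F(\theta)$, and as $[E:F]=d+1$ the powers $1,\theta,\dots,\theta^d$ are an $F$-basis, so $P=[1:\theta]$ works.

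For infinite $F$, I would deduce the general case from the classical Lemma~\ref{lem.split-case}. Fix an $F$-basis $\omega_1,\dots,\omega_m$ of $E$ and write $a_i=\sum_k a_{ik}\omega_k$; expanding each $M_j(a)=\sum_k c_{jk}\,\omega_k$ with $c_{jk}\in F[a_{ik}]$, the condition ``$\{M_j(a)\}_j$ is an $F$-basis of $E$'' becomes $D\ne 0$ for the single polynomial $D=\det(c_{jk})$. Passing to $\overline F$, where $E\otimes_F\overline F\cong\overline F^{\,m}$, a tuple $(a_{ik})$ corresponds to an $m$-tuple of points $a^{(1)},\dots,a^{(m)}\in\mathbb{A}^{n+1}(\overline F)$ and $D$ becomes (a nonzero scalar times) $\det\bigl(M_j(a^{(i)})\bigr)_{i,j}$; by Lemma~\ref{lem.split-case}, $m$ points in general position make this determinant nonzero, so $D$ is not the zero polynomial. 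Since $F$ is infinite, $D$ is nonzero at some $F$-point, i.e.\ at some $a\in E^{n+1}$ --- exactly a good vector.

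It remains to treat $F=\mathbb{F}_q$ with $q>2$, so that $E=\mathbb{F}_{q^m}$. Here I would argue by an explicit construction: write $E=F(\theta)$ with minimal polynomial $p_\theta$, and set $b_i=\theta^{c_i}$ for exponents $c_1,\dots,c_n$ (for instance $c_i=(d+1)^{i-1}$, so that each $\beta_i\le d<d+1$ makes $\beta\mapsto\sum_i c_i\beta_i$ the base-$(d+1)$ digit map, hence injective) chosen so that the $m$ integers $\sum_i c_i\beta_i$, as $\beta$ ranges over $\{\beta:\sum\beta_i\le d\}$, are pairwise distinct. Then the monomial values are pairwise distinct powers $\theta^e$ with $e$ ranging over a size-$m$ set $S$, and these powers form an $F$-basis of $E$ precisely when a certain square matrix is invertible --- namely the one recording, for each $e\in S$ with $e\ge m$, the coordinates of $\theta^e$ reduced modulo $p_\theta$ along the basis vectors $\theta^k$ with $k\in T\colonequals\{0,\dots,m-1\}\setminus S$ (a square matrix, since $|S|=m$). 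One still has the freedom to replace $\theta$ by any other primitive element of $E$, that is, to use any irreducible polynomial of degree $m$ over $\mathbb{F}_q$ as $p_\theta$, and the ``bad'' choices form a proper Zariski-closed subset of the space of coefficients. The crux --- and the step I expect to be the main obstacle --- is to show that for every $q>2$ this bad locus does not contain \emph{all} degree-$m$ irreducible polynomials over $\mathbb{F}_q$: a crude count of irreducibles against the size of the bad locus is too weak, so one likely needs an explicit admissible choice of $p_\theta$, or a more structural argument using the cyclic structure of $\mathbb{F}_{q^m}^{\times}$ (taking $\theta$ a multiplicative generator, so that multiplication by any power of $\theta$ is an $F$-linear automorphism of $E$ and hence carries bases to bases). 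It is at this last step that $q>2$ is indispensable --- for $q=2$ the available polynomials are too constrained and the statement can fail.
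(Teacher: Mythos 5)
Your reduction to linear algebra and your treatment of the infinite-field case are fine and essentially coincide with the paper's argument: your determinant $D$ is the polynomial $H$ of Proposition~\ref{prop.main-infinite}, its nonvanishing is checked after base change to $\overline{F}$ via Lemma~\ref{lem.split-case}, and infiniteness of $F$ lets you specialize. The $n=1$ remark (primitive element theorem) is also correct.

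The finite-field case, however, is where the entire difficulty of the theorem lies, and there your proposal has a genuine gap that you yourself flag: you restrict attention to points of the very special form $(1:\theta^{c_1}:\cdots:\theta^{c_n})$ with fixed exponents $c_i=(d+1)^{i-1}$, reduce to the invertibility of a certain matrix depending on the minimal polynomial $p_\theta$, and then state that the key step --- showing that for every $q>2$ some irreducible $p_\theta$ of degree $m$ avoids the bad locus --- is unresolved, with no candidate argument beyond the hope of ``an explicit admissible choice'' or the cyclic structure of $\mathbb{F}_{q^m}^{\times}$. Nothing in the proposal makes this plausible: confining the search to a one-parameter family of monomial-type points discards almost all of $\mathbb{P}^n(\mathbb{F}_{q^m})$, and a count of irreducible polynomials against the bad locus is, as you note, too weak. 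The paper does something entirely different: it shows that the union $\mathcal{H}$ of \emph{all} degree $d$ hypersurfaces over $\mathbb{F}_q$ cannot contain every $\mathbb{F}_{q^m}$-point of $\mathbb{P}^n$, by a global point count combining the Cafure--Matera bound for geometrically irreducible hypersurfaces, Serre's bound for geometrically reducible ones, and a sharpened version of Poonen's estimate for the proportion $t$ of non-geometrically-irreducible hypersurfaces (Proposition~\ref{prop:sharper-bounds}); the range $q>d$ is handled separately by a degree argument against space-filling hypersurfaces, and six small triples $(n,q,d)$ are checked by computer. Your sketch engages with none of this, so the theorem is not proved for finite $F$. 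Finally, your closing assertion that the statement ``can fail'' for $q=2$ is unsupported and contradicts the paper's remark that computer experiments suggest it may well hold there; only the proof method breaks down at $q=2$.
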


Theorem~\ref{thm:main} can be restated as follows: there exist $a_0, a_1, ..., a_n \in E$ such that the $m$ elements $a_0^{i_0}a_1^{i_1} \cdots a_n^{i_n}$ of $E$ are linearly independent over $F$.
Here $i_0, i_1, \ldots, i_n$ range over non-negative integers such that $i_0 + i_1 + \ldots +i_n = d$. Note that in the case, where $n = 1$, this assertion specializes to the Primitive Element Theorem for the separable field extension $E/F$. 

As an application of Theorem~\ref{thm:main}, we determine the maximal dimensions of linear systems $\mathcal{L}_1$ and $\mathcal{L}_2$  of hypersurfaces in $\mathbb P^n$ over a finite field $F$, where every $F$-member of $\mathcal{L}_1$ is reducible and every $F$-member of $\mathcal{L}_2$ is irreducible. Our main result in this direction is Theorem~\ref{thm:irreducible} below. Before stating it, we recall some terminology.

Let $F$ be a field. An $F$-linear system $\mathcal{L}$ of degree $d$ hypersurfaces in $\mathbb P^n$ is a linear subspace of such hypersurfaces defined over $F$. By the no-name lemma~\cite{Shafarevich}*{Appendix 3}, $\mathcal{L}$ has a basis $f_0, f_1, \ldots, f_r$ such that each $f_i$ is defined over $F$. Members of $\mathcal{L}$ are then hypersurfaces in $\mathbb P^n$ of the form $c_0 f_0 + \ldots + c_r f_r = 0$ where $c_0, \ldots, c_r$ are scalars. Members of $\mathcal{L}$ corresponding to $c_0, \ldots, c_r \in F$ are called \emph{$F$-members}. The \emph{dimension} of $\mathcal{L}$ is $r$ (the projective dimension).

Given a property $\mathcal{P}$ of algebraic hypersurfaces defined over a finite field $\mathbb F_q$, it is natural to ask the following.

\begin{question}\label{q:property-P}
What is the largest dimension of a linear system $\mathcal{L}$ of degree $d$ hypersurfaces in $\mathbb{P}^n$  such that every $\mathbb{F}_q$-member of $\mathcal{L}$ satisfies $\mathcal{P}$?
\end{question}

In our previous paper~\cite{AGR23}, we addressed Question~\ref{q:property-P} when $\mathcal{P}$ is
the property of being smooth. In the paper ~\cite{AGY23}, the first two authors and Chi Hoi Yip addressed Question~\ref{q:property-P}  when $\mathcal{P}$ is the property of being non-blocking\footnote{Here a hypersurface $X$ in $\mathbb P^n$ defined over $\mathbb{F}_q$ is called \emph{blocking} if $X \cap L$
has an $\mathbb F_q$-point for every line $L \subset \mathbb{P}^n$ defined over $\mathbb{F}_q$ and \emph{non-blocking} otherwise.}. Parts (a) and (b) of Theorem~\ref{thm:irreducible} below answer Question~\ref{q:property-P} when $\mathcal{P}$ is the property of being
reducible, and parts (c) and (d) when $\mathcal{P}$ is the property of being irreducible. 

\begin{theorem}\label{thm:irreducible}
Let $d \geqslant 2$ and $n \geqslant 1$ be integers, $m \colonequals \binom{n+d}{n}$,  
$r \colonequals \binom{n+d-1}{n}$,
and $\mathbb F_q$ be a finite field of order $q > 2$.
Then

\smallskip
(a) there exists an $(r-1)$-dimensional $\mathbb F_q$-linear system $\mathcal{L}_{\rm red}$ of degree $d$ hypersurfaces in $\mathbb P^n$ such that every $\mathbb{F}_q$-member of $\mathcal{L}_{\rm red}$ is reducible over $\mathbb{F}_q$.

\smallskip
(b) Every $\mathbb{F}_q$-linear system $\mathcal{L}$ of dimension $\geqslant r$ has an $\mathbb{F}_q$-member which is irreducible over $\mathbb{F}_q$.

\smallskip
(c) There exists an $(m - 1 - r)$-dimensional $\mathbb{F}_q$-linear system $\mathcal{L}_{\rm irr}$ of degree $d$ hypersurfaces in $\mathbb P^n$ such that every $\mathbb{F}_q$-member of $\mathcal{L}_{\rm irr}$ is irreducible over $\mathbb{F}_q$.

\smallskip
(d) Every $\mathbb F_q$-linear system $\mathcal{L}$ of dimension $\geqslant m - r$ has an $\mathbb{F}_q$-member which is reducible over $\mathbb{F}_q$.
\end{theorem}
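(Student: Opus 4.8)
The plan is to prove the four statements using two families of linear systems: for the ``reducible'' parts (a) and (d), linear systems built from forms divisible by a fixed linear form; for the ``irreducible'' parts (b) and (c), the linear system of degree $d$ hypersurfaces passing through a single well-chosen Galois orbit, produced by applying Theorem~\ref{thm:main} in degree $d-1$.

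For (a), I take $\mathcal L_{\rm red}$ to be the projectivization of $x_0\cdot V_{d-1}$, where $V_{d-1}$ is the $\mathbb F_q$-space of degree $d-1$ forms in $x_0,\dots,x_n$; since $\dim_{\mathbb F_q} V_{d-1}=\binom{n+d-1}{n}=r$, this system has dimension $r-1$, and every nonzero member has the shape $x_0 g$ with $g$ a nonzero form of degree $d-1\geq 1$, hence is reducible over $\mathbb F_q$. For (d), if $\mathcal L=\mathbb P(W)$ has dimension $\geq m-r$ then $\dim_{\mathbb F_q}W\geq m-r+1$, so $\dim_{\mathbb F_q}\bigl(W\cap x_0 V_{d-1}\bigr)\geq(m-r+1)+r-m=1$, and any nonzero element of this intersection is a reducible $\mathbb F_q$-member of $\mathcal L$. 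Both parts are straightforward dimension counts and do not use Theorem~\ref{thm:main}.

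The core of the argument is the choice of a point for (b) and (c). Since $d-1\geq 1$ and $|\mathbb F_q|>2$, Theorem~\ref{thm:main} applied with $d$ replaced by $d-1$ and with the separable extension $\mathbb F_{q^r}/\mathbb F_q$ (of degree $\binom{n+d-1}{n}=r$) yields a point $Q\in\mathbb P^n(\mathbb F_{q^r})$ lying on no hypersurface of degree $d-1$ over $\mathbb F_q$. I first upgrade this to: $Q$ lies on no hypersurface of degree $\leq d-1$ over $\mathbb F_q$, since a nonzero form $h$ of degree $e\leq d-1$ vanishing at $Q$ would make $x_0^{\,d-1-e}h$ a nonzero degree $d-1$ form vanishing at $Q$. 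Consequently every nonzero degree $d$ form $f$ vanishing at $Q$ is irreducible over $\mathbb F_q$: a factorization $f=gh$ into forms of positive degree would force $\deg g,\deg h\leq d-1$, hence $g(Q)h(Q)=f(Q)=0$ with $g(Q),h(Q)\neq 0$, a contradiction. Finally, fixing homogeneous coordinates for $Q$, the evaluation map $V\to\mathbb F_{q^r}$, $f\mapsto f(Q)$, is surjective: its restriction $V_{d-1}\to\mathbb F_{q^r}$ is injective (this is Theorem~\ref{thm:main} in degree $d-1$) between $\mathbb F_q$-spaces of equal dimension $r$, hence bijective, and then $x_jV_{d-1}$ already surjects for any $j$ with $x_j(Q)\neq 0$. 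Therefore $W_Q:=\{f\in V: f(Q)=0\}$ has $\mathbb F_q$-dimension $m-r$.

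Parts (c) and (b) then follow at once. For (c), $\mathcal L_{\rm irr}:=\mathbb P(W_Q)$ has dimension $m-r-1=m-1-r$ (note $m-r=\binom{n+d-1}{n-1}\geq 1$), and every nonzero $\mathbb F_q$-member is irreducible over $\mathbb F_q$ by the observation above. For (b), if $\mathcal L=\mathbb P(W)$ has dimension $\geq r$ then $\dim_{\mathbb F_q}W\geq r+1$, so $\dim_{\mathbb F_q}(W\cap W_Q)\geq(r+1)+(m-r)-m=1$, and any nonzero element of $W\cap W_Q$ is an irreducible $\mathbb F_q$-member of $\mathcal L$. The one place that requires care is recognizing that it is Theorem~\ref{thm:main} \emph{in degree $d-1$}, not $d$, that produces the right point, and that what actually forbids a degree $d$ form through $Q$ from factoring is the stronger property of avoiding \emph{all} hypersurfaces of degree $\leq d-1$.
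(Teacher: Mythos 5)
Your proposal is correct and follows essentially the same route as the paper: the same $x_0\cdot V_{d-1}$ construction for (a), the same application of Theorem~\ref{thm:main} in degree $d-1$ to produce a Galois orbit whose vanishing locus gives $\mathcal{L}_{\rm irr}$ with the same irreducibility argument for (c), and the same linear-algebra intersection counts for (b) and (d). The only (harmless) difference is that you pin down $\dim_{\mathbb F_q} W_Q$ exactly equal to $m-r$ via surjectivity of the evaluation map, whereas the paper only needs and proves the lower bound $\geqslant m-r$ and then selects a subspace.
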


When the finite field $\mathbb{F}_q$ is replaced by its algebraic closure $\overline{\mathbb{F}_q}$ or any other algebraically closed field, parts (a) and (b) of Theorem~\ref{thm:irreducible} remain valid, whereas the dimensions in parts (c) and (d) get reduced by $n$; see Proposition~\ref{prop:irreducible}. In particular, part (c) fails when $\mathbb F_q$ is replaced by an algebraically closed field.

Computer experiments with specific values of $n$ and $d$ suggest that the assertion of
Theorem~\ref{thm:main} may be true when $|F| = 2$,
even though our proof does not go through 
in this case. If the assumption that $|F| > 2$ can be dropped in Theorem~\ref{thm:main},  
then the assumption that $q > 2$ can be dropped in Theorem~\ref{thm:irreducible}.

The remainder of this paper is structured as follows. In Section~\ref{sect.infinite}, we use a general position argument to prove Theorem~\ref{thm:main} under the assumption that $F$ is infinite. In 
the case where $F$ is finite, the concept of general position no longer applies. Here we employ a point-counting argument. The strategy 
behind this counting argument is outlined in Section~\ref{sect.strategy}, and is carried out in Sections~\ref{sec:comb},~\ref{sect.count}~and~\ref{sect.finite}. 
In Section~\ref{sect:irreducible-linear-systems} we deduce Theorem~\ref{thm:irreducible} from Theorem~\ref{thm:main}. 
In Section~\ref{sect:irred-alg-closed} we prove 
a variant of Theorem~\ref{thm:irreducible} with $\mathbb F_q$ replaced by an algebraically closed field.

\medskip 

\textbf{Acknowledgements.} The second and third authors are supported by NSERC Discovery grants RGPIN-2018-03690 for D. Ghioca and RGPIN-2023-03353 for Z. Reichstein.  We are grateful to anonymous referees whose comments and questions helped us improve the exposition.

\medskip
\textbf{Data availability statement.} No data sets were used or generated in the course of this research.

\medskip
\textbf{Conflict of interest statement.} All authors have no conflicts of interest.

\section{Proof of Theorem~\ref{thm:main} in the case of infinite fields}
\label{sect.infinite} 

The following lemma is well known; we include a short proof for the sake of completeness.

\begin{lemma} \label{lem.split-case}
Let $F$ be an infinite field, $d$ and $n$ be positive integers, and $\displaystyle m =  \binom{n+d}{n}$. Then there exist 
$P_1, \ldots, P_m \in \mathbb{P}^n(F)$ such that no degree $d$ hypersurface in $\mathbb{P}^n$ passes through $P_1, \ldots, P_m$.
\end{lemma}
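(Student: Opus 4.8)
The plan is to construct the points $P_1, \dots, P_m$ one at a time, keeping track of the linear conditions they impose on the $m$-dimensional $F$-vector space $V$ of degree $d$ forms in $x_0, \dots, x_n$. For a finite subset $S = \{P_1, \dots, P_k\} \subset \mathbb{P}^n(F)$, let $V(S) \subseteq V$ be the subspace of forms vanishing at every point of $S$. Since vanishing at one point is a single linear condition, one always has $\dim V(S) \geq m - k$, and the points of $S$ are said to impose independent conditions precisely when equality holds. The lemma is then the assertion that there is a set $S$ of size $m$ with $V(S) = 0$.

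First I would set up the induction. Suppose we have already found $P_1, \dots, P_k \in \mathbb{P}^n(F)$ with $\dim V(\{P_1, \dots, P_k\}) = m - k$, where $0 \leq k < m$ (the case $k = 0$ being vacuous, as $\dim V = m$). Then this subspace is nonzero, so we may choose a nonzero form $g \in V(\{P_1, \dots, P_k\})$. The key step — and the only place the hypothesis that $F$ is infinite enters — is that a nonzero homogeneous polynomial over an infinite field cannot vanish at every point of $\mathbb{P}^n(F)$: regarded as a polynomial function on $F^{n+1}$, $g$ is not identically zero because $F$ is infinite, so there exists $(a_0, \dots, a_n) \in F^{n+1}$ with $g(a_0, \dots, a_n) \neq 0$, and the corresponding point $P_{k+1} \in \mathbb{P}^n(F)$ satisfies $g(P_{k+1}) \neq 0$.

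It remains to check that $P_1, \dots, P_{k+1}$ still impose independent conditions. Since $g \in V(\{P_1, \dots, P_k\})$ but $g \notin V(\{P_1, \dots, P_{k+1}\})$, the inclusion $V(\{P_1, \dots, P_{k+1}\}) \subsetneq V(\{P_1, \dots, P_k\})$ is strict, hence $\dim V(\{P_1, \dots, P_{k+1}\}) \leq m - k - 1$; combined with the general lower bound $\dim V(\{P_1, \dots, P_{k+1}\}) \geq m - (k+1)$, this forces equality. Iterating until $k = m$ yields points $P_1, \dots, P_m$ with $V(\{P_1, \dots, P_m\}) = 0$, i.e.\ no degree $d$ hypersurface passes through all of them. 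There is no serious obstacle here: the argument is entirely elementary, and the only subtlety worth recording is why infiniteness of $F$ is used, namely to guarantee that a nonzero form does not vanish identically on $\mathbb{P}^n(F)$ (which is exactly the hypothesis that fails for small finite fields and forces the separate point-counting argument in later sections).
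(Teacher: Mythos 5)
Your proof is correct and follows essentially the same route as the paper: an induction choosing each new point $P_{k+1}$ off the zero locus of a nonzero form vanishing at the previous points, with infiniteness of $F$ guaranteeing such a point exists, forcing $\dim V_i = m-i$ at each stage.
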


\begin{proof} Let $V_0 = H^0(\mathbb{P}^n, \mathcal{O}(d))$ be the $m$-dimensional vector space space of all degree $d$ forms in $x_0, \ldots, x_n$ 
and $V_{i} \subset V$ be the subspace of forms vanishing at $P_1, \ldots, P_i$.
Clearly $V_i \subseteq V_{i-1}$ for any choice of $P_1, \ldots, P_i$.
Requiring forms to vanish on each $P_i$ imposes one linear condition; 
hence, $\dim(V_i) \geqslant m - i$, again for any choice of $P_1, \ldots, P_i$.
We claim that for a suitable choice of $P_1, \ldots, P_m$, we have 
\begin{equation} \label{e.inclusion}
V_{i} \subsetneq V_{i-1} 
\end{equation}
for every $i = 1, 2, \ldots, m$ or equivalently, $\dim(V_i) = m - i$.  In particular,
for this choice of $P_1, \ldots, P_m$, we will have $\dim(V_m) = 0$, and the lemma will follow. 

We will choose $P_1, \ldots, P_i$ so that~\eqref{e.inclusion} holds, by induction on $i \in \{ 1, \ldots, m \}$. Indeed, assume $P_1, \ldots, P_{i-1}$ 
have been chosen. Since $\dim(V_{i-1}) \geqslant m - i + 1 > 0$, there exists a non-zero element $f_i \in V_{i-1}$. 
We will now choose $P_i \in \mathbb{P}^n(F)$ so that $f_i(P_i) \neq 0$. 
A point $P_i$ with this property exists since $F$ is an infinite field. 
For this choice of $P_i$, $f \in V_{i-1} \setminus V_i$, and \eqref{e.inclusion} follows.
This completes the proof of the claim and thus of Lemma~\ref{lem.split-case}.
\end{proof}

\begin{proposition} \label{prop.main-infinite} Let $d$ and $n$ be positive integers and $E/F$ be a commutative algebra of degree $m = \binom{n+d}{n}$ over $F$.
View $E$ as an $m$-dimensional vector space over $F$. Then there is a homogeneous polynomial function $H$ on 
the affine space $\mathbb A_F^{n+1}(E) \simeq \mathbb A_F^{(n+1)m}$ defined over $F$ with
the following property: For any field extension $F'/F$, $E' = E \otimes_F F'$, a point $a = (a_0: \ldots: a_n) \in  \mathbb P^n(E')$ 
lies on a hypersurface of degree $d$ defined over $F'$ if and only if $H(a_0, a_1, \ldots, a_n) = 0$.
\end{proposition}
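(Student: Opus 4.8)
The plan is to make the condition ``$a$ lies on a degree $d$ hypersurface defined over $F'$'' into the vanishing of an explicit determinant. Fix an $F$-basis $e_1, \dots, e_m$ of $E$, so that $E' = E \otimes_F F'$ has the same basis over $F'$. Write the $m$ monomials $x_0^{i_0} \cdots x_n^{i_n}$ of degree $d$ as $\mu_1, \dots, \mu_m$. Given $a = (a_0, \dots, a_n) \in E'^{\,n+1}$, each product $\mu_j(a) = a_0^{i_0} \cdots a_n^{i_n} \in E'$ expands in the basis as $\mu_j(a) = \sum_{k=1}^m c_{jk}(a) \, e_k$, where each coordinate $c_{jk}(a)$ is a polynomial in the $(n+1)m$ entries of $a$ with coefficients in $F$ (these coefficients come from the structure constants of the algebra $E/F$), and is homogeneous of degree $d$ in the natural grading that assigns weight $1$ to each scalar entry of each $a_i$. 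I then set $H(a_0, \dots, a_n) \colonequals \det\bigl( c_{jk}(a) \bigr)_{1 \le j,k \le m}$, an $m \times m$ determinant. Since each entry is homogeneous of degree $d$ over $F$, $H$ is a homogeneous polynomial of degree $dm$ on $\mathbb{A}^{(n+1)m}_F$ defined over $F$, independent of $F'$.

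The key step is to verify that $H(a) = 0$ if and only if $a$ lies on a degree $d$ hypersurface over $F'$. A degree $d$ form $g = \sum_{j=1}^m \lambda_j \mu_j$ with $\lambda_j \in F'$, not all zero, satisfies $g(a) = 0$ precisely when $\sum_j \lambda_j \mu_j(a) = 0$ in $E'$, i.e. when $\sum_j \lambda_j c_{jk}(a) = 0$ for every $k = 1, \dots, m$. So such a nonzero $(\lambda_j)$ exists if and only if the $m \times m$ matrix $(c_{jk}(a))$ is singular over $F'$, which is exactly the condition $H(a) = 0$. This equivalence is the heart of the proposition, and it is essentially immediate once the matrix is set up correctly; the main thing to be careful about is that ``$a$ lies on a hypersurface of degree $d$'' should be interpreted projectively — if any $a_i \ne 0$ the argument above is literal, and the edge case where all $a_i = 0$ (so $a \notin \mathbb{P}^n(E')$) need not be addressed.

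There is one point requiring a small remark: I should note that $H$ is not identically zero (otherwise the ``if and only if'' is vacuously uninteresting), but this is not actually claimed in the statement, so strictly it can be omitted; it will follow a posteriori from Theorem~\ref{thm:main} when $|F| > 2$ and from Lemma~\ref{lem.split-case} when $F$ is infinite. The only genuine obstacle is bookkeeping: confirming that the coordinates $c_{jk}(a)$ really are polynomial (not merely rational) in the entries of $a$ and defined over $F$. This holds because multiplication in $E$ is $F$-bilinear, so each $\mu_j(a)$ — being an iterated product of the $F$-linear-combination elements $a_0, \dots, a_n$ — has coordinates that are $F$-polynomial in the coordinates of $a_0, \dots, a_n$, with the homogeneity in $d$ clear from the $d$-fold product. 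Assembling these into a determinant and invoking the singularity criterion completes the proof.
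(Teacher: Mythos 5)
Your proof is correct and follows essentially the same route as the paper: expand the degree-$d$ monomials evaluated at $a$ in a fixed $F$-basis of $E$ using the structure constants, so that the coordinate functions are degree-$d$ homogeneous polynomials over $F$, and take $H$ to be the determinant of the resulting $m\times m$ matrix, with the equivalence given by linear dependence of $M_1(a),\ldots,M_m(a)$ over $F'$. No gaps to report.
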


\begin{proof}  Let $M_1, \ldots, M_m$ be distinct monomials of degree $d$ in $x_0, \ldots, x_n$. Clearly
$a = (a_0:a_1: \ldots : a_n) \in\mathbb P^n(E)$ lies on a hypersurface of degree $d$ in $\mathbb P^n$ defined over $F$ if and only if 
$M_1(a), \ldots, M_m(a)$ are linearly dependent over $F$.

Suppose $\{b_1, \ldots, b_n\}$ is an $F$-basis of $E$. Write 
\begin{equation} \label{e.structure-constants}
b_i b_j = \sum_{h = 1}^n c_{ij}^h b_h,
\end{equation}
where the structure constants $c_{ij}^h$ lie in $F$. Using the basis $b_1, \ldots, b_m$ we can identify $E$ with $F^m$ as an $F$-vector space
(not necessarily as an algebra). Set 
\begin{equation} \label{e.e_i-coordinates}
a_i = y_{i, 1} b_1 + \ldots + y_{i, m} b_m, 
\end{equation}
where each $y_{i, j} \in F$. 
Using formulas \eqref{e.structure-constants}, for every $s = 1, \ldots, m$, we can express $M_s(a)$ 
in the form $M_s(a) = p_{s, 1} b_1 + \ldots + p_{s, m} b_m$, 
where each $p_{s, t}$ is a homogeneous polynomial 
of degree $d$ in $y_{i, j}$ with coefficients in $F$. 
By abuse of notation, we will denote these polynomials by $p_{s, t}(y_{i, j})$.

Now, view $y_{i, j}$ as independent $(n+1)m$ variables, as $i$ ranges from $0$ to $n$ and $j$ ranges from $1$ to $m$.
Set
\[ H(y_{i, j}) = \det \, \begin{pmatrix}
p_{1,1}(y_{i, j}) & p_{1,2}(y_{i, j}) & \cdots & p_{1,m}(y_{i, j}) \\
p_{2,1}(y_{i, j}) & p_{2,2}(y_{i, j}) & \cdots & p_{2,m}(y_{i, j}) \\
\vdots  & \vdots  & \ddots & \vdots  \\
p_{m,1}(y_{i, j}) & p_{m,2}(y_{i, j}) & \cdots & p_{m,m}(y_{i, j}) 
\end{pmatrix} . \]
For any field extension $F'/F$, an $F'$-point $(\alpha_{i, j}') \in \mathbb A_F^{(n+1)m}$
represents a point $a' = (a_0': \ldots: a_m') \in \mathbb P^n(E')$, where
$a_i' = \alpha_{i, 1} b_1 + \ldots + \alpha_{i, m} b_m \in E'$ for each $i = 0, 1, \ldots, n$. 
By our construction, $H(\alpha_{i, j}) = 0$ if and only if $M_1(a'), \ldots, M_m(a')$ are linearly dependent over $F'$,
and the proposition follows.
\end{proof}

\begin{remark} In the case, where $E/F$ is a separable field extension of degree $m$, we can give an alternative description of $H$ as follows. Denote the
normal closure of $E$ over $F$ by $E^{\rm norm}$, the Galois group $\operatorname{Gal}(E^{\rm norm}/F)$ by $G$, and the Galois group
$\operatorname{Gal}(E^{\rm norm}/E)$ by $G_0$. Note that $[G:G_0] = [E: F] = m$. 

It is easy to see that there exists a homogeneous polynomial 
\[ P_{d, n} \in \mathbb Z[x_{ij} \, | \,  i =  1, \ldots, m \quad \text{and} \quad j = 0, 1,  \ldots, n  ] \]
such that $m$ points $(x_{i0}: \ldots: x_{in})$ of $\mathbb P^n$, where $i = 1, \ldots, m$, all lie on the same hypersurface of degree $d$ if and only if $P_{d, n}(x_{ij}) = 0$. Then given a point $A = (a_0, \ldots, a_n)$ in $\mathbb A_E^{n+1}$, we set
$H(A) = P_{d,n}(\sigma_1(A), \ldots, \sigma_m(A))$, where $\sigma_1, \ldots, \sigma_m$
are representatives of the $m$ left cosets of $G_0$ in $G$.
\end{remark} 

\begin{proof}[Conclusion of the proof of Theorem~\ref{thm:main}, assuming $F$ is an infinite field]
Let $H(y_{i,j})$ be the homogeneous polynomial function on $\mathbb A_F(E^n) \simeq \mathbb A_F^{(n+1)m}$ 
defined over $F$ whose existence is asserted by Proposition~\ref{prop.main-infinite}. We claim that $H$ is not identically $0$. 

Once this claim is established, Theorem~\ref{thm:main} readily follows from Proposition~\ref{prop.main-infinite}; since $F$ is an infinite field,
we can specialize each $x_{ij}$ to some $c_{ij} \in F$ so that $H(c_{ij}) \neq 0$. 

To prove the claim, it suffices to show that $H(c_{ij}) \neq 0$, for some choice of $c_{ij}$ in a larger field $F'$ containing $F$. 
Let us choose $F'$ so that $F'$ splits $E/F$, i.e., $E \otimes_F F'$ isomorphic to $E' \colonequals F' \times \ldots \times F'$ ($m$ times).
In particular, we can take $F'$ to be an algebraic closure of $F$. 

Using Proposition~\ref{prop.main-infinite}, we can rephrase the above observation as follows: to prove the existence of a point
$a = (a_0: a_1: \ldots: a_n) \in \mathbb P^n(E)$ with the property that it does not lie of any hypersurface of degree 
$d$ defined over $F$, it suffices to prove the existence of a point $a' = (a_0': \ldots: a_n) \in \mathbb P^n(E^{'})$
which does not lie on any hypersurface of degree $d$ defined over $F'$. To finish the proof, observe that the existence of $a'$ with this property
is equivalent to Lemma~\ref{lem.split-case} with $F = F'$.
\end{proof}

\section{Proof strategy for Theorem~\ref{thm:main} in the finite field case}
 \label{sect.strategy}

From now on, we will assume that $F = \mathbb{F}_q$ and $E = \mathbb{F}_{q^m}$ are finite fields. This section
outlines a strategy for a proof of Theorem~\ref{thm:main} in this case. We begin by proving Theorem~\ref{thm:main} 
under the assumption $q > d$, which greatly simplifies our counting argument.

\begin{proposition}\label{prop:q>d}
Let $q$ be a prime power, $d, n\in\mathbb{N}$ and $m\colonequals \binom{n+d}{n}$. Assume $q>d$. Then there exists a point $P\in\mathbb{P}^n(\mathbb{F}_{q^m})$ such that $P$ does not lie on any hypersurface of degree $d$ defined over $\mathbb{F}_q$.
\end{proposition}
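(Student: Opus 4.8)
The plan is to count, using inclusion–exclusion, the points $P \in \mathbb{P}^n(\mathbb{F}_{q^m})$ that \emph{do} lie on some degree $d$ hypersurface defined over $\mathbb{F}_q$, and to show this count is strictly less than the total number of points in $\mathbb{P}^n(\mathbb{F}_{q^m})$. First I would set up the pairing: the set of degree $d$ hypersurfaces over $\mathbb{F}_q$ is $\mathbb{P}(V) = \mathbb{P}^{m-1}(\mathbb{F}_q)$, and for each hypersurface $X_f$ we want to bound $\#\{P \in \mathbb{P}^n(\mathbb{F}_{q^m}) : P \in X_f\}$. A more efficient route is to work with the incidence variety $I = \{(P, [f]) : f(P) = 0\} \subset \mathbb{P}^n(\mathbb{F}_{q^m}) \times \mathbb{P}^{m-1}(\mathbb{F}_q)$ and count its points by summing over the \emph{second} factor won't obviously help; instead sum over the first factor: for a fixed $P \in \mathbb{P}^n(\mathbb{F}_{q^m})$, the forms over $\mathbb{F}_q$ vanishing at $P$ form an $\mathbb{F}_q$-subspace of $V$ whose codimension is the rank of the linear functional $f \mapsto f(P)$ on $V \otimes \mathbb{F}_{q^m}$, i.e. the dimension over $\mathbb{F}_q$ of the $\mathbb{F}_q$-span of the $m$ monomial-evaluations $M_1(P), \dots, M_m(P) \in \mathbb{F}_{q^m}$. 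Calling this rank $\rho(P)$, the hypersurfaces through $P$ number $\tfrac{q^{m - \rho(P)} - 1}{q - 1}$, and $P$ is a witness for the proposition precisely when $\rho(P) = m$.

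So I want to show there exists $P$ with $\rho(P) = m$, equivalently that $\sum_{P} \tfrac{q^{m-\rho(P)}-1}{q-1}$ is less than $\#\mathbb{P}^n(\mathbb{F}_{q^m}) \cdot \tfrac{q^m-1}{q-1}$, or more simply that not every $P$ has $\rho(P) \leq m-1$. Rather than pushing the crude bound, the key step is to exploit the hypothesis $q > d$: under this assumption I expect one can produce a single explicit point, or a point of a restricted shape, directly. The natural candidate is $P = (1 : \alpha : \alpha^{?} : \dots)$ built from a generator $\alpha$ of $\mathbb{F}_{q^m}/\mathbb{F}_q$; for $n=1$ this is exactly the primitive element theorem as remarked after Theorem~\ref{thm:main}. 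For general $n$, I would try $P = (1 : \alpha : \alpha^{d+1} : \alpha^{(d+1)^2} : \cdots : \alpha^{(d+1)^n})$ — a "moment curve with a gap": the degree $d$ monomials $x_0^{i_0} \cdots x_n^{i_n}$ with $\sum i_j = d$ then evaluate to powers $\alpha^{e}$ where $e = i_1 + (d+1) i_2 + \cdots + (d+1)^n i_n$, and because each $i_j \leq d < d+1$, these exponents $e$ are \emph{distinct} (base-$(d+1)$ digits), ranging over $m$ values in $\{0, 1, \dots, (d+1)^n\}$. One checks $(d+1)^n < m$ fails in general, so instead I would bound: it suffices that $m$ distinct powers $\alpha^{e_1}, \dots, \alpha^{e_m}$ with $0 \le e_s < m$ (which forces the exponents to be exactly $0, 1, \dots, m-1$) are $\mathbb{F}_q$-linearly independent, which holds iff $\alpha$ has degree $\geq m$ over $\mathbb{F}_q$, i.e. iff $\alpha$ is a primitive element. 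The issue is arranging the exponents to be a set of $m$ consecutive (or at least $\mathbb{F}_q$-independent) values; this is exactly where $q > d$ enters, allowing enough "room" between monomial exponents.

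The step I expect to be the main obstacle is precisely this combinatorial-to-linear-algebra reduction: choosing the coordinates of $P$ as powers of a primitive element so that the induced map from degree $d$ monomials to exponents is injective \emph{and} lands in a range small enough to force $\mathbb{F}_q$-independence. If a clean direct construction is not available, the fallback is the counting argument: bound $\#\{P : \rho(P) \leq m-1\}$ by $\sum_{H} \#(H \cap \mathbb{P}^n(\mathbb{F}_{q^m}))$ over the $\tfrac{q^m-1}{q-1}$ hyperplanes $H = \mathbb{P}(W)$ with $W \subsetneq V$ of codimension $1$... but this overcounts badly; the efficient version counts $\mathbb{P}$-points $P$ with $\rho(P) \le m-1$ as lying on at least one degree $d$ hypersurface over $\mathbb{F}_q$, so their number is at most $(\#\mathbb{P}^{m-1}(\mathbb{F}_q)) \cdot N_{d,n}$ where $N_{d,n} = \max_{f \neq 0} \#(X_f \cap \mathbb{P}^n(\mathbb{F}_{q^m}))$, and one applies a Serre-type or Schwartz–Zippel bound $N_{d,n} \leq d \, q^{m(n-1)} + (\text{lower order})$ for the number of $\mathbb{F}_{q^m}$-points on a degree $d$ hypersurface in $\mathbb{P}^n$. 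Then the desired inequality becomes $\tfrac{q^m - 1}{q-1} \cdot d\, q^{m(n-1)}(1 + o(1)) < \tfrac{q^{m(n+1)} - 1}{q^m - 1}$, which simplifies (using $\tfrac{q^m-1}{q-1} = 1 + q + \cdots + q^{m-1} < q^m$ and the trivial lower bound $\tfrac{q^{m(n+1)}-1}{q^m-1} > q^{mn}$) to roughly $d\, q^m \cdot q^{m(n-1)} < q^{mn}$, i.e. $d < q^{m - mn + mn} $ — one needs to track the exponents carefully, but with $q > d$ the slack in $\tfrac{q^m-1}{q-1} < q^m$ versus the clean lower bound on $\#\mathbb{P}^n(\mathbb{F}_{q^m})$ should close the gap for all $m \geq 2$, with small cases handled separately.
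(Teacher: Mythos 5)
Neither branch of your proposal closes as written. The explicit ``moment curve with gaps'' point is the route you prefer, and the obstacle you flag is genuine and unresolved: the exponents $e=i_1+(d+1)i_2+\cdots$ attached to the degree-$d$ monomials are indeed pairwise distinct, but their largest value is at least $d(d+1)^{n-1}$, which exceeds $m-1$ for all $n,d\geqslant 2$, and distinct powers of a primitive element $\alpha$ of $\mathbb{F}_{q^m}$ are forced to be $\mathbb{F}_q$-linearly independent only when the exponent set is contained in $\{0,1,\ldots,m-1\}$. Nothing in the construction confines the exponents to such a set, and the hypothesis $q>d$ does not enter this construction at all, so the hope that ``$q>d$ allows enough room between monomial exponents'' is unsubstantiated. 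In the counting fallback, the decisive inequality is also wrong as stated: replacing $\frac{q^m-1}{q-1}$ by $q^m$ while keeping the factor $d$ would require $d\,q^{mn}<q^{mn}$, which is false; the slack in $\frac{q^m-1}{q-1}<q^m$ is not what absorbs $d$.

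What absorbs $d$ is exactly the hypothesis $q>d$, i.e.\ $d\leqslant q-1$, cancelled against the denominator $q-1$ in the number $\frac{q^m-1}{q-1}$ of degree-$d$ hypersurfaces over $\mathbb{F}_q$. With Serre's bound \eqref{e.serre} applied over $\mathbb{F}_{q^m}$ one gets, for $n\geqslant 2$,
\[
\#\mathcal{H}(\mathbb{F}_{q^m})\leqslant\frac{q^m-1}{q-1}\Bigl(d\,q^{m(n-1)}+q^{m(n-2)}+\cdots+1\Bigr)\leqslant (q^m-1)\,q^{m(n-1)}+\frac{q^{m(n-1)}-1}{q-1}\leqslant q^{mn}-1,
\]
which is smaller than $\#\mathbb{P}^n(\mathbb{F}_{q^m})$; the case $n=1$ follows from $\#\mathcal{H}(\mathbb{F}_{q^m})\leqslant d\,\frac{q^m-1}{q-1}\leqslant q^m-1$, and no exceptional small cases arise. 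So your fallback can be repaired into a complete proof, and it then differs from the paper's argument only in the tool used: the paper does not sum point counts over hypersurfaces but regards $\mathcal{H}$ as a single hypersurface of degree $d\,\frac{q^m-1}{q-1}\leqslant q^m-1$ (again via $d\leqslant q-1$) and quotes the Mercier--Rolland theorem that a hypersurface defined over $\mathbb{F}_q$ containing every point of $\mathbb{P}^n(\mathbb{F}_{q^m})$ has degree at least $q^m+1$. Both are short degree/counting arguments; the paper's avoids Serre's estimate entirely, while yours, once corrected, avoids the space-filling theorem.
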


Note that here $q = 2$ is allowed, unlike in Theorem~\ref{thm:main}, but only in the (trivial) case, where $d = 1$.
For the remainder of the paper, 
\[ \text{$\mathcal{H} \subset \mathbb P^n$ will denote the union of all hypersurfaces of degree $d$ defined over $\mathbb{F}_q$.}
\]

\begin{proof}[Proof of Proposition~\ref{prop:q>d}] Observe that $\deg(\mathcal{H}) = d(q^{m-1}+...+q+1)$. Since $q>d$, we have
$$
\deg(\mathcal{H})\leqslant (q-1)(q^{m-1}+\cdots+q+1) = q^{m} - 1 
$$
On the other hand, the degree of a space-filling hypersurface in $\mathbb P^n(\mathbb F_{q^m})$ defined over $\mathbb{F}_q$ is at least $q^{m}+1$; see, e.g.,~\cite{MR98}*{Th\'eor\`eme~2.1}. We conclude that $\mathcal{H}$ is not space-filling in $\mathbb P^n(\mathbb F_{q^m})$, and the proposition follows. 
\end{proof}

When $d \geqslant q$, we will need a more delicate argument to show that $\mathcal{H}$ does not contain every 
$\mathbb F_{q^m}$-point of $\mathbb P^n$. We will estimate
the number of $\mathbb F_{q^m}$-points on $\mathcal{H}$, with the goal of showing that this number
is strictly smaller than the number of $\mathbb F_{q^m}$-points in
$\mathbb P^n$. To estimate the number of $\mathbb{F}_{q^m}$-points on $\mathcal{H}$, we will subdivide
the hypersurfaces $X\subset \mathbb{P}^n$ of degree $d$ defined over $\mathbb{F}_q$ into two classes:

\begin{enumerate}[label=\alph*)]
\item \label{geom-irred-case} $X$ is geometrically irreducible (that is, irreducible over $\overline{\mathbb{F}_q}$), or
\item \label{not-geom-irred-case} $X$ is geometrically reducible. 
\end{enumerate}

When $X\subset\mathbb{P}^n$ is geometrically irreducible, we will use the inequality
\begin{equation} \label{e.C-M}
|X(\mathbb{F}_{q^m})| \leqslant (q^{m(n-1)} + \cdots + q^m + 1) + (d-1)(d-2) q^{m(n-3/2)} + 5 d^{13/3} q^{m(n-2)},
\end{equation}
due to Cafure and Matera~\cite{CM06}*{Theorem~5.2}. When $X$ is geometrically reducible, we will use
Serre's estimate~\cite{Ser91}*{Th\'eor\`eme},  
\begin{equation} \label{e.serre}
|X(\mathbb{F}_q)| \leqslant d q^{m(n-1)} + q^{m(n-2)} + \cdots + q^m + 1.
\end{equation}
Note that both of these are polynomial bounds in $q$ of degree $m(n-1)$. However, 
the one in Case~\ref{not-geom-irred-case} is asymptotically weaker, because the leading term $q^{m(n-1)}$
comes with coefficient $1$ in~\eqref{e.C-M} and with coefficient $d$ in~\eqref{e.serre}. 
To get a strong upper bound on the number of $\mathbb F_{q^m}$-points on $\mathcal{H}$, we need to make sure
that Case \ref{not-geom-irred-case} does not occur too often. In other words, if we let
$t$ denote the fraction of hypersurfaces in $\mathbb P^n$ over $\mathbb{F}_q$ of fixed degree $d$ which are \emph{not} 
geometrically irreducible, then our first task is to bound $t$ from above. Note that $t$ depends on $q$, $d$ and $n$.

Poonen showed that $t \to 0$, as $d \to \infty$ and $q$ and $n$ remain fixed; see~\cite{Poo04}*{Proposition 2.7}.
This is not enough for our purposes. We will refine the inequalities from the proof of \cite{Poo04}*{Proposition 2.7}
to establish the following upper bound on $t$.

\begin{proposition}\label{prop:sharper-bounds} 
Let $t$ denote the fraction of hypersurfaces in $\mathbb{P}^{n}$ of degree $d$ over $\mathbb{F}_q$ 
that are geometrically reducible.  Assume that one of the following conditions holds: 
\begin{itemize}
\item $n= 2$, $d\geqslant 6$ and  $q\geqslant 3$; or
\item $n\geqslant 3$, $d\geqslant 3$ and $q\geqslant 3$.
\end{itemize} 
Then $(d-1) tq \leqslant 2$. 
\end{proposition}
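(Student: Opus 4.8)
The goal is to bound $t$, the fraction of degree-$d$ hypersurfaces in $\mathbb{P}^n$ over $\mathbb{F}_q$ that are geometrically reducible, and show that $(d-1)tq \leqslant 2$ under the stated hypotheses. The strategy is to follow Poonen's argument in \cite{Poo04}*{Proposition 2.7} but keep track of constants carefully rather than merely extracting an asymptotic statement. The first step is to stratify the geometrically reducible hypersurfaces: a degree-$d$ form $f$ over $\mathbb{F}_q$ that is geometrically reducible either (i) factors over $\mathbb{F}_q$ as a product of forms of positive degrees $e$ and $d-e$ with $1 \leqslant e \leqslant d/2$, or (ii) is irreducible over $\mathbb{F}_q$ but factors over some extension, in which case the Galois conjugates of a factor multiply together, forcing $d$ to be composite and $f$ to be (a power of) the norm of a form of degree $d/k$ for some $k \geqslant 2$ dividing $d$. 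Each stratum can be parametrized, so its cardinality is at most the number of pairs (or tuples) of lower-degree forms, which is a product of terms of the shape $q^{\binom{n+e}{n}}$.

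The second step is to turn this into an explicit count. Writing $N_e \colonequals \binom{n+e}{n}$ for the dimension of the space of degree-$e$ forms (so that $m = N_d$), the number of geometrically reducible forms is at most roughly $\sum_{e=1}^{\lfloor d/2 \rfloor} q^{N_e + N_{d-e}}$ for the reducible-over-$\mathbb{F}_q$ case, plus a smaller contribution of order $q^{N_{d/k}\cdot(\text{something})}$ from case (ii); one checks the dominant term comes from $e=1$, i.e.\ from hypersurfaces containing a hyperplane component, contributing about $q^{N_1 + N_{d-1}} = q^{(n+1) + r}$. Dividing by the total count $q^m - 1$ (or $\sim q^m$) of nonzero forms up to scalar, one gets $t \lesssim q^{(n+1)+r-m}$ times a bounded combinatorial factor. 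Since $m - r = N_d - N_{d-1} = \binom{n+d-1}{n-1}$, the exponent $(n+1) + r - m = (n+1) - \binom{n+d-1}{n-1}$ is very negative once $d$ and $n$ are not too small, which is exactly what the case hypotheses ($n=2, d\geqslant 6$ or $n \geqslant 3, d \geqslant 3$) are engineered to guarantee. The final step is to combine this bound with the factor $(d-1)q$: one needs $(d-1)q \cdot q^{(n+1)-\binom{n+d-1}{n-1}} \cdot C \leqslant 2$ where $C$ absorbs the sum over $e$ and the number of divisors of $d$. Since $d-1 < q^{\log_q d} $ is negligible against the doubly-fast-growing binomial exponent, this reduces to a finite check plus a monotonicity argument in $d$ and $n$.

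The main obstacle I expect is not the asymptotics but the boundary cases: the hypotheses deliberately exclude $n=2$ with $d \leqslant 5$ and $n \geqslant 3$ with $d \leqslant 2$, so the inequality $(d-1)tq \leqslant 2$ is genuinely tight near those thresholds, and the crude bound "number of reducible forms $\leqslant \sum q^{N_e+N_{d-e}}$" may not suffice. One likely needs the sharper observation that once one factor is pinned down the complementary factor is determined, so the count is closer to $\sum_e (q^{N_e}-1)(q^{N_{d-e}}-1)/(q-1)$ rather than $\sum_e q^{N_e + N_{d-e}}$, shaving a factor of roughly $q$, and similarly that the $e=1$ term should be counted as (number of hyperplanes over $\mathbb{F}_q$) $\times$ (number of degree $d-1$ forms), i.e.\ $\frac{q^{n+1}-1}{q-1}\cdot\frac{q^{N_{d-1}}-1}{q-1}$. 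With these refinements the borderline cases $n=2,d=6$ and $n=3,d=3$ should check out by direct substitution, and a short induction (increasing $d$ by $1$ multiplies the key exponent deficit $m-r-(n+1)$ by more than $q$) handles all larger values. In the writeup I would first record the combinatorial inequalities as a separate lemma (this is presumably what Section~\ref{sec:comb} does), then assemble the counting bound in Section~\ref{sect.count}, so that the proof of Proposition~\ref{prop:sharper-bounds} itself is just the final arithmetic.
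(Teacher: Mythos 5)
Your proposal follows essentially the same route as the paper: it refines Poonen's count from \cite{Poo04}*{Proposition 2.7} by splitting the geometrically reducible hypersurfaces into those reducible over $\mathbb{F}_q$ (parametrized by factor degrees $e$, $d-e$) and those irreducible over $\mathbb{F}_q$ but splitting over an extension (norm forms for divisors $e>1$ of $d$), bounds each stratum by an explicit power of $q$, identifies the dominant $e=1$ term with exponent $(n+1)+r-m$, and finishes with a numerical check at the boundary cases plus monotonicity in $q$, $d$, $n$ --- which is exactly what the paper does in Sections~\ref{sec:comb} and~\ref{sect.count}. The only differences are cosmetic: the refinements you anticipate needing (shaving factors of $q-1$) turn out to be unnecessary, since the crude bounds already give $(d-1)tq$ well below $2$ at $(n,q,d)=(2,3,6)$ and $(3,3,3)$.
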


We will prove Proposition~\ref{prop:sharper-bounds} in Section~\ref{sect.count}, then use it to complete the proof of Theorem~\ref{thm:main} in Section~\ref{sect.finite}. In Section~\ref{sec:comb} we gather several 
elementary inequalities involving binomial coefficients, which
will be used in our proofs.


\section{Combinatorial bounds}
\label{sec:comb}

Throughout this section, we let $q,d\geqslant 3$ and $n \geqslant 2$ be integers.
For each $i$ between $0$ and $d$, 
set 
\begin{equation}
\label{eq:N_i}
N_i = \binom{n+d}{d} - \binom{n+i}{n} - \binom{n+d-i}{n}.
\end{equation}

\begin{lemma}
\label{lem:1001}
Assume $2(i+1) \leqslant d$.
Then
\begin{itemize}
\item[(a)] $N_{i+1} - N_i \geqslant d-2i-1$; and
\item[(b)] $N_{i+1} - N_1 \geqslant d - 3$.
\end{itemize}
\end{lemma}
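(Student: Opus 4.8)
\textbf{Proof plan for Lemma~\ref{lem:1001}.}

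The plan is to reduce both parts to a single telescoping/monotonicity computation on the differences $N_{i+1} - N_i$. Recall that $N_i = \binom{n+d}{d} - \binom{n+i}{n} - \binom{n+d-i}{n}$, so the term $\binom{n+d}{d}$ is constant in $i$ and cancels in any difference; thus
\[
N_{i+1} - N_i = \binom{n+i}{n} - \binom{n+i+1}{n} + \binom{n+d-i}{n} - \binom{n+d-i-1}{n}.
\]
Using the Pascal identity $\binom{a+1}{n} - \binom{a}{n} = \binom{a}{n-1}$, this simplifies to
\[
N_{i+1} - N_i = \binom{n+d-i-1}{n-1} - \binom{n+i}{n-1}.
\]
For part (a), the hypothesis $2(i+1) \le d$ gives $i+1 \le d-i-1$, hence $n+i \le n+d-i-2 < n+d-i-1$, so the difference of binomial coefficients above is nonnegative; the remaining work is to show it is at least $d - 2i - 1$. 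Since $\binom{n+d-i-1}{n-1} - \binom{n+i}{n-1}$ is a sum of $(d-i-1) - i = d - 2i - 1$ consecutive Pascal-type differences $\binom{k+1}{n-1} - \binom{k}{n-1} = \binom{k}{n-2}$ for $k$ running from $n+i$ to $n+d-i-2$, and each such term is $\binom{k}{n-2} \ge 1$ because $k \ge n+i \ge n \ge n-2$ (here $n \ge 2$ is used), we get $N_{i+1} - N_i \ge d - 2i - 1$, which is part (a).

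For part (b), I would write $N_{i+1} - N_1 = \sum_{j=1}^{i}(N_{j+1} - N_j)$ (a telescoping sum, valid since $i \ge 1$; the case $i = 1$ reads $N_2 - N_1$). By part (a) applied to each index $j$ with $1 \le j \le i$ — note $2(j+1) \le 2(i+1) \le d$, so the hypothesis of (a) holds for each $j$ — we have $N_{j+1} - N_j \ge d - 2j - 1$. Summing, $N_{i+1} - N_1 \ge \sum_{j=1}^{i}(d - 2j - 1) = i(d-1) - 2 \cdot \frac{i(i+1)}{2} = i(d-1) - i(i+1) = i(d - i - 2)$. It remains to check $i(d-i-2) \ge d-3$ under the constraint $1 \le i \le (d-2)/2$. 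The function $i \mapsto i(d-i-2)$ on this range is concave, so its minimum over the interval endpoints governs the bound: at $i = 1$ it equals $d - 3$, and at the other end it is larger; since the expression is nonnegative and concave it stays $\ge d-3$ throughout. (Alternatively, $i(d-i-2) - (d-3) = (i-1)(d-i-1) - 1 + \dots$; one checks directly that for $i \ge 1$ and $d \ge 2i+2$ this is $\ge 0$, with equality at $i=1$.) This yields part (b).

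The only mildly delicate point is the final elementary inequality $i(d-i-2) \ge d-3$ in part (b): one must be careful that it can fail outside the stated range (e.g.\ it is false for large $i$ relative to $d$), so the hypothesis $2(i+1) \le d$ is genuinely needed there, exactly as in part (a). Everything else is a routine application of Pascal's identity and the positivity of binomial coefficients $\binom{k}{n-2}$ for $k \ge n-2$, which holds throughout since $n \ge 2$.
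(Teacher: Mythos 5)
Your proof is correct, and for part (a) it is essentially the paper's argument: both reduce $N_{i+1}-N_i$ to $\binom{n+d-i-1}{n-1}-\binom{n+i}{n-1}$ via Pascal's identity and then exhibit this difference as a sum of $d-2i-1$ binomial coefficients $\binom{k}{n-2}\geqslant 1$ (the paper sums via the hockey-stick identity, you telescope Pascal differences --- the same computation; incidentally your count of $d-2i-1$ terms is the accurate one, while the paper's stated count $d-2i+1$ is a harmless slip, since only $\geqslant d-2i-1$ is claimed). For part (b) you diverge mildly: the paper also telescopes $N_{i+1}-N_1=\sum_{j=1}^{i}(N_{j+1}-N_j)$, but then simply keeps the single term $N_2-N_1\geqslant d-3$ and discards the rest using their non-negativity from (a); you instead add up all the lower bounds $d-2j-1$ to get $N_{i+1}-N_1\geqslant i(d-i-2)$ and then check $i(d-i-2)\geqslant d-3$ for $1\leqslant i\leqslant (d-2)/2$. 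That buys a stronger intermediate bound at the cost of an extra elementary inequality, and two small points there deserve tightening: your parenthetical factorization is garbled (the clean identity is $i(d-i-2)-(d-3)=(i-1)(d-i-3)$, which is visibly $\geqslant 0$ when $i\geqslant 1$ and $2(i+1)\leqslant d$), and the claim that the value at the right endpoint exceeds $d-3$ should be verified explicitly (it reduces to $(d-4)^2\geqslant 0$). With either of those patches your argument is complete; the paper's ``keep only $N_2-N_1$'' step is just the leaner way to finish. Both you and the paper implicitly use $i\geqslant 1$ in (b), which is how the lemma is applied later.
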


\begin{proof}
(a) Using Pascal's identity recursively, we rewrite $N_{i+1} - N_i$ as
\begin{align*}
N_{i + 1} - N_i & = \binom{n+d-i-1}{n-1} - \binom{n+i}{n-1} \\
& = \sum_{j=0}^{d-i} \binom{n-2+j}{n-2} - \sum_{j=0}^{i+1}\binom{n-2+j}{n-2} \\
& = \sum_{j=i+2}^{d-i} \binom{n-2+j}{n-2}
\end{align*}
The above sum has $(d-i) - (i-1) = d - 2i + 1 \geqslant 1$ terms by our assumption on $i$. Moreover, each term
$\geqslant 1$, so the sum is $\geqslant d - 2i + 1$, as desired.

\smallskip
(b) Write
$N_{i+1} - N_{1}  = (N_{i+1} - N_i) + (N_i - N_{i-1}) + \ldots + (N_2 - N_1)$.
Part~(a) tells us that each term in this sum is non-negative, and the last term, $N_2 - N_1$, is $\geqslant d - 3$.
Thus
\begin{equation}
\label{e.t1-2}
N_{i+1} - N_{1}  = (N_{i+1} - N_i) + (N_i - N_{i-1}) + \ldots + (N_2 - N_1) \geqslant N_2-N_1\geqslant d-3,
\end{equation}
as desired. 
\end{proof}


\begin{lemma} \label{lem.u1} Let $\displaystyle u_1:=\sum_{i=1}^{\lfloor d/2\rfloor} q^{-N_i}$, where $N_i$ is as in \eqref{eq:N_i}. Then

\smallskip
(a) $\displaystyle u_1 \leqslant \frac{29}{27} q^{2-d}$ if 
$n=2$, $q\geqslant 3$ and $d\geqslant 6$. 

\smallskip
(b) $\displaystyle u_1 \leqslant \frac{3}{2} 
q^{-\frac{n(n+d-1)}{2}+n+1}$ for all $n\geqslant  3$, $q\geqslant  3$, and $d\geqslant  3$.
\end{lemma}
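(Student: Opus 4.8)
The plan is to bound the geometric-type sum $u_1 = \sum_{i=1}^{\lfloor d/2\rfloor} q^{-N_i}$ by exploiting the near-monotonicity of the exponents $N_i$ established in Lemma~\ref{lem:1001}. The key observation is that Lemma~\ref{lem:1001}(a) tells us $N_{i+1} - N_i \geq d - 2i - 1 \geq 1$ whenever $2(i+1) \leq d$, so the sequence $N_1, N_2, \ldots, N_{\lfloor d/2\rfloor}$ is strictly increasing; consequently the terms $q^{-N_i}$ decay at least geometrically with ratio $\leq q^{-1} \leq 1/3$ once we are past the first term, and in fact the early gaps (small $i$) are large — $N_2 - N_1 \geq d-3$, etc. The strategy is therefore: (i) pull out the dominant term $q^{-N_1}$; (ii) bound the tail $\sum_{i\geq 2} q^{-N_i}$ using Lemma~\ref{lem:1001}(b), which gives $N_i \geq N_1 + (d-3)$ for all $i \geq 2$ in range, together with the geometric decay to sum the tail as a convergent series $\leq q^{-N_1 - (d-3)} \cdot \frac{1}{1 - q^{-1}} \leq q^{-N_1-(d-3)}\cdot\frac{3}{2}$; (iii) combine to get $u_1 \leq q^{-N_1}(1 + \tfrac{3}{2} q^{-(d-3)})$; (iv) substitute the explicit value of $N_1 = \binom{n+d}{d} - \binom{n+1}{n} - \binom{n+d-1}{n} = \binom{n+d}{d} - (n+1) - \binom{n+d-1}{n}$ and simplify separately in the two regimes $n = 2$ and $n \geq 3$.

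For part (a), $n=2$: here $N_1 = \binom{d+2}{2} - 3 - \binom{d+1}{2} = (d+1) - 3 = d - 2$, so $q^{-N_1} = q^{2-d}$, and the factor $1 + \tfrac32 q^{-(d-3)}$ with $d \geq 6$ and $q \geq 3$ is at most $1 + \tfrac32 \cdot 3^{-3} = 1 + \tfrac{1}{18} = \tfrac{19}{18}$; one must check this is $\leq \tfrac{29}{27}$, which holds since $\tfrac{19}{18} = \tfrac{57}{54} < \tfrac{58}{54} = \tfrac{29}{27}$. (I would double-check whether a sharper tail bound is needed to land exactly on $\tfrac{29}{27}$, or whether using $q^{-N_2} \le q^{-N_1 - (d-3)}$ with the actual value $N_2 - N_1 = d-3$ for $n=2$ and then genuine ratio $q^{-1}$ for later terms suffices — the crude bound above already works.) For part (b), $n \geq 3$: I would show $N_1 = \binom{n+d}{d} - (n+1) - \binom{n+d-1}{n} \geq \tfrac{n(n+d-1)}{2} - n - 1$ by a direct binomial manipulation — writing $\binom{n+d}{d} - \binom{n+d-1}{n} = \binom{n+d-1}{n-1}$ via Pascal, and then bounding $\binom{n+d-1}{n-1} \geq \tfrac{n(n+d-1)}{2}$, which is plausible for $n \geq 3$, $d \geq 3$ (the right side is essentially $\binom{n+d-1}{2}$-type growth while the left grows faster) — so that $q^{-N_1} \leq q^{-\frac{n(n+d-1)}{2} + n + 1}$, and the tail factor $1 + \tfrac32 q^{-(d-3)} \leq 1 + \tfrac32 = \tfrac52$ is too weak; instead note the leading factor should absorb the constant, i.e. we want $q^{-N_1}(1 + \tfrac32 q^{-(d-3)}) \leq \tfrac32 q^{-\frac{n(n+d-1)}{2}+n+1}$, which follows if $N_1 \geq \tfrac{n(n+d-1)}{2} - n - 1$ with a little room to spare to cover the bracket, e.g. $N_1 \geq \tfrac{n(n+d-1)}{2} - n$, making the bracket times $q^{-1}$ at most $q^{-1} + \tfrac32 q^{-(d-2)} \leq \tfrac13 + \tfrac32\cdot 3^{-1} < \tfrac32$ — wait, I should instead keep $N_1$ as is and check $1 + \tfrac32 q^{-(d-3)} \le \tfrac32$ fails, so the correct bookkeeping is to verify $\binom{n+d-1}{n-1} - (n+1) \ge \tfrac{n(n+d-1)}{2} - n - 1 + \log_q(\tfrac32)$ is not quite an integer statement; cleanest is: prove $N_1 \geq \tfrac{n(n+d-1)}{2} - n - 1$ and separately that the tail is at most $\tfrac12 q^{-N_1}$, giving $u_1 \le \tfrac32 q^{-N_1}$. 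The tail bound $\sum_{i\ge 2} q^{-N_i} \le \tfrac12 q^{-N_1}$ follows from $N_2 \ge N_1 + (d-3) \ge N_1$ and geometric decay: $\sum_{i \ge 2} q^{-N_i} \le q^{-N_1-(d-3)}/(1 - q^{-1}) \le q^{-N_1} \cdot 3^{-(d-3)} \cdot \tfrac32 \le \tfrac12 q^{-N_1}$ as soon as $3^{-(d-3)} \le \tfrac13$, i.e. $d \ge 4$; the boundary case $d = 3$ (where $\lfloor d/2 \rfloor = 1$, so $u_1 = q^{-N_1}$ has no tail at all) is trivial.

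The main obstacle I anticipate is the binomial-coefficient bookkeeping in part (b): establishing the clean inequality $\binom{n+d-1}{n-1} - (n+1) \geq \tfrac{n(n+d-1)}{2} - n - 1$, equivalently $\binom{n+d-1}{n-1} \geq \tfrac{n(n+d-1)}{2}$, uniformly for $n \geq 3$, $d \geq 3$. This should reduce to noting $\binom{n+d-1}{n-1} \geq \binom{n+d-1}{2} = \tfrac{(n+d-1)(n+d-2)}{2} \geq \tfrac{n(n+d-1)}{2}$ since $n + d - 2 \geq n$ (as $d \geq 2$) and $n - 1 \geq 2$ (as $n \geq 3$), so it is genuinely routine — the real care is just in tracking the multiplicative constant $\tfrac32$ through the tail estimate and confirming the small cases $d = 3, 4, 5$ and $n = 2$ boundary separately, rather than any deep difficulty.
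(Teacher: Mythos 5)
Your proposal is correct and follows essentially the same route as the paper: pull out the dominant term $q^{-N_1}$, bound $N_1 \geqslant \frac{n(n+d-1)}{2}-(n+1)$ (trivially exact for $n=2$), and control the remaining terms via Lemma~\ref{lem:1001}. The only (harmless) difference is that you sum the tail as a geometric series using the strict increase of the $N_i$ from Lemma~\ref{lem:1001}(a), whereas the paper bounds each tail term by $q^{-N_1-(d-3)}$ and multiplies by the number of terms, then checks that $1+\left(\frac{d}{2}-1\right)q^{3-d}$ is at most $\frac{29}{27}$ resp.\ $\frac{3}{2}$; your constants are slightly sharper and still fit under the stated bounds.
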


\begin{proof} 
We first estimate $N_1$ from below. Note that we assume $n \geqslant 2$ throughout.
\begin{align}
\nonumber N_1 & = \binom{n+d-1}{n-1}-\binom{n+1}{n} = \binom{n+d-1}{d} - \binom{n+1}{1} \\
\nonumber  & = \frac{(n+d-1)(n+d-2)\cdots (n+1)n}{d!} - (n+1) \\
\label{e.t1-3} &  = (n+d-1)\cdot \left(\frac{n+d-2}{d}\right)\cdots \left(\frac{n+1}{3}\right)\cdot \frac{n}{2}  - (n+1) \\
\nonumber & \geqslant \frac{(n+d-1)n}{2} - (n+1).
\end{align}
Using this estimate in combination with Lemma~\ref{lem:1001}(b), we obtain:
\begin{align*}
u_1 \leqslant & q^{-N_1} \cdot \sum_{i=1}^{\lfloor d/2\rfloor} q^{-(N_i - N_1)} 
     \leqslant q^{-N_1} \left( 1 + \sum_{i=2}^{\lfloor d/2\rfloor} q^{-(d-3)} \right) \\
     & \leqslant q^{-N_1}\left(1 + \left(\frac{d}{2}-1\right) q^{3-d} \right) 
     \leqslant q^{-\frac{(n+d-1)n}{2} + (n+1)} \left(1 + \left(\frac{d}{2}-1\right) q^{3-d} \right).
\end{align*}
An elementary computation shows that for integers $d \geqslant  6$ and $q\geqslant  3$,
the expression $\displaystyle \left(1 + \left(\frac{d}{2}-1\right) q^{3-d} \right)$ is at most $\displaystyle \frac{29}{27}$. (This maximal value 
is attained when $q=3$ and $d=6$.) This completes the proof of part (a). 

Similarly, when $q\geqslant 3$ and $d\geqslant 3$, the maximal value of the expression $\left(1 + \left(\frac{d}{2}-1\right) q^{3-d} \right)$ is $\displaystyle \frac{3}{2}$. (This maximal value is attained when $q=3$ and $d=3$). This completes the proof of part (b). 
%
\end{proof}

\begin{lemma}
\label{lem:1002} 
For each divisor $e>1$ of $d$, set
$\displaystyle M_e := \binom{d+n}{n}-e\cdot \binom{d/e+n}{n}$.
Then 
$$
M_{e} \geqslant  \binom{e}{2}\binom{n}{2} \left(\frac{d}{e}\right)^2 - e + 1.
$$
for any $n\geqslant  2$, $q\geqslant  3$, $d\geqslant  3$. Here $e\mid d$, where $e>1$. 
\end{lemma}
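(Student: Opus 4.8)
The plan is to give a direct estimate of $M_e$ in the spirit of Lemma~\ref{lem:1001}, by repeatedly applying Pascal's identity to peel off the difference between $\binom{d+n}{n}$ and $e\binom{d/e+n}{n}$. Write $k \colonequals d/e$, so that $d = ek$, and the claim becomes
\[
\binom{ek+n}{n} - e\binom{k+n}{n} \geqslant \binom{e}{2}\binom{n}{2} k^2 - e + 1.
\]
First I would rewrite the left-hand side as a telescoping sum. Using $\binom{a+n}{n} - \binom{a}{n} = \sum_{j=1}^{?}\binom{a-?+j}{n-1}$ type identities (Pascal's rule in the form $\binom{N}{n} = \binom{N-1}{n} + \binom{N-1}{n-1}$, applied repeatedly to descend from $\binom{ek+n}{n}$ down to $\binom{k+n}{n}$), one gets
\[
\binom{ek+n}{n} - \binom{k+n}{n} = \sum_{j=1}^{(e-1)k} \binom{k+n-1+j}{n-1}.
\]
Then I would similarly express each of the $e-1$ ``extra'' copies of $\binom{k+n}{n}$: more precisely, subtracting the remaining $(e-1)\binom{k+n}{n}$ should be handled by comparing, for suitable blocks of indices $j$, the term $\binom{k+n-1+j}{n-1}$ against $\binom{k+n}{n}/k$-sized pieces. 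A cleaner bookkeeping: group the $(e-1)k$ terms of the sum into $e-1$ consecutive blocks of length $k$, the $\ell$-th block ($\ell = 1, \dots, e-1$) consisting of $j$ with $\ell k < j \leqslant (\ell+1)k$ — wait, more carefully indices $j$ from $(\ell-1)k+1$ to $\ell k$ shifted appropriately; I will arrange it so that the $\ell$-th block telescopes exactly to $\binom{(\ell+1)k+n}{n} - \binom{\ell k + n}{n}$, and then I compare this with $\binom{k+n}{n}$.

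The key inequality is then that for $\ell \geqslant 1$,
\[
\binom{(\ell+1)k+n}{n} - \binom{\ell k+n}{n} - \binom{k+n}{n} \;\geqslant\; (\text{something like } \ell k^2 \binom{n}{2}) - 1,
\]
which, summed over $\ell = 1, \dots, e-1$, produces $\sum_{\ell=1}^{e-1}\ell k^2\binom{n}{2} - (e-1) = \binom{e}{2}\binom{n}{2}k^2 - (e-1)$, matching the desired bound $\binom{e}{2}\binom{n}{2}k^2 - e + 1$. To prove this per-$\ell$ inequality, I would again use the telescoping expression: $\binom{(\ell+1)k+n}{n} - \binom{\ell k+n}{n} = \sum_{j=1}^{k}\binom{\ell k + n - 1 + j}{n-1}$, and each of these $k$ binomials is at least $\binom{k+n-1+j}{n-1}$ (monotonicity in the top argument, since $\ell \geqslant 1$), so the difference $\sum_{j=1}^k \left[\binom{\ell k+n-1+j}{n-1} - \binom{k+n-1+j}{n-1}\right]$ is nonnegative; to extract the quadratic-in-$k$ gain I would bound one such difference from below using $\binom{b}{n-1} - \binom{a}{n-1} \geqslant (b-a)\binom{a}{n-2} \geqslant (b-a)\binom{n-1+a'}{n-2}$ and note $b - a = (\ell-1)k$ over the relevant range, picking up a factor of $k$ from the range of $j$ and another factor of roughly $(\ell-1)k\binom{n}{2}$ from each term — the precise constants need care to land exactly on $\binom{n}{2}$, and this is where I would be most worried about being off by lower-order terms.

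The main obstacle I anticipate is the exact constant and the $-e+1$ correction term: the clean statement $\binom{e}{2}\binom{n}{2}k^2 - e + 1$ suggests the authors found an argument with no slack to spare, so a naive application of $\binom{b}{n-1}-\binom{a}{n-1}\geqslant (b-a)\binom{n-1}{n-2} = (b-a)(n-1)$ would only give a factor $(n-1)$, not $\binom{n}{2}$, and I would need the stronger $\binom{b}{n-1} - \binom{a}{n-1} \geqslant \sum_{t=a}^{b-1}\binom{t}{n-2}$ with the terms $\binom{t}{n-2} \geqslant \binom{k+n-2}{n-2} \geqslant \cdots$ bounded below by something of size $k\binom{n}{2}$ — i.e. I would have to iterate the Pascal expansion one more level, down to $\binom{\cdot}{n-2}$, and carefully count that the number of terms times their minimum size gives the claimed $\binom{n}{2}k^2$. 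An alternative, perhaps safer, route is pure induction on $e$ (base case $e=2$ checked directly, using $d\geqslant 3$, $n \geqslant 2$) with the inductive step comparing $M_e$ to $M_{e-1}$ for a fixed $k$ — but since $e$ must divide $d$, the induction is on the pair $(d,e)$ with $k$ fixed, treating $k$ as the genuine free parameter; this reduces everything to the single inequality $\binom{(e+1)k+n}{n} \geqslant \binom{ek+n}{n} + \binom{k+n}{n} + ek^2\binom{n}{2} - 1$, which is the heart of the matter and which I would prove by the telescoping/Pascal computation sketched above.
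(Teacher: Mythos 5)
Your plan is workable and the constants do land exactly, but it is a genuinely different argument from the paper's. The paper proves the inequality by a purely combinatorial count with no estimates at all: it interprets $\binom{d+n}{n}$ as the number of $n$-subsets of a set $S=T\cup F$ with $|T|=d$, $|F|=n$, partitions $T$ into $e$ blocks $T_1,\dots,T_e$ of size $d/e$, notes that the $n$-subsets of the $S_i=T_i\cup F$ account for exactly $e\binom{d/e+n}{n}-e+1$ subsets (they pairwise share only $F$), and then exhibits $\binom{e}{2}\binom{n}{2}(d/e)^2$ further $n$-subsets of the form $\{a,b\}\cup E$ with $a\in T_i$, $b\in T_j$, $i<j$, $E\subset F$ of size $n-2$, none of which lies in any $S_k$; the bound follows by counting. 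Your route instead telescopes with Pascal's identity, in the spirit of Lemma~\ref{lem:1001}: with $k=d/e$, reduce to the per-block inequality $\binom{(\ell+1)k+n}{n}-\binom{\ell k+n}{n}-\binom{k+n}{n}\geqslant \ell k^2\binom{n}{2}-1$ and sum over $\ell=1,\dots,e-1$. This does work, and your worry about the constant resolves cleanly provided you anchor the comparison at $\binom{k+n}{n}=1+\sum_{j=1}^{k}\binom{n-1+j}{n-1}$ (rather than at $\binom{k+n-1+j}{n-1}$, which forces a separate $e=2$ base case): for each $j\geqslant 1$ one has $\binom{\ell k+n-1+j}{n-1}-\binom{n-1+j}{n-1}=\sum_{t=n-1+j}^{\ell k+n-2+j}\binom{t}{n-2}$, a sum of exactly $\ell k$ terms each $\geqslant\binom{n}{2}$ since $t\geqslant n$, giving $\ell k\binom{n}{2}$ per $j$ and $\ell k^2\binom{n}{2}$ after summing over $j=1,\dots,k$, with the $-1$ coming from the $j=0$ term; summing the per-block bounds telescopes to $M_e\geqslant\binom{e}{2}\binom{n}{2}k^2-(e-1)$, exactly the claim. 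The trade-off: the paper's injection argument is shorter and estimate-free, while your Pascal-expansion argument is more mechanical, unifies this lemma with the technique already used for Lemma~\ref{lem:1001}, and makes transparent where each factor in $\binom{e}{2}\binom{n}{2}(d/e)^2$ comes from.
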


\begin{proof}
Let $S = T \cup F$, where $T$ and $F$ are disjoint sets of cardinality $d$ and $n$, respectively.
The binomial coefficient $\displaystyle \binom{d+n}{n}$ counts the number of $n$-subsets of $S$. 

Partition $T$ as $T = T_1 \cup T_2 \cup \cdots \cup T_{e}$, where $|T_i|=d/e$ for each $i$, and set $S_i = T_i \cup F$. 
Note that $|S_i| = (d/e) + n$; hence, the binomial coefficient $\displaystyle \binom{d/e+n}{n}$ counts the number of $n$-subsets of $S_i$. 
It is also clear that the number of \emph{common} $n$-subsets of $S_i$ and $S_j$ for $i\neq j$ is exactly $1$, 
namely the $n$-set $F$. Thus, the total number of $n$-subsets arising from $S_1, S_2, \ldots, S_e$ is exactly:
\begin{align*}
    e\cdot \left(\binom{d/e + n}{n}  - 1\right) + 1 = e \cdot \binom{d/e + n}{n} - e + 1.
\end{align*}
Next, we construct additional $n$-subsets of $S$ that are not contained in any $S_k$. 
Fix integers $1 \leqslant i < j \leqslant e$. Choose elements $a\in T_i$ and $b\in T_j$ and
consider $n$-subsets of $S$ of the form
$$
\{a, b\} \cup E
$$
for some $(n-2)$-subset $E$ of $F$. By our construction, 
$\displaystyle \{a, \, b \} \cup E$ is not contained in $S_k$ for any $1\leqslant k\leqslant e$. 
The number of subsets of the form $\displaystyle \{a, \, b \} \cup E$ is equal to $(d/e)\cdot (d/e)\cdot \binom{n}{n-2}$ 
once $i$ and $j$ are fixed, because there are $d/e$ ways to choose $a$ in $T_i$, $d/e$ ways to choose $b$ in $T_j$,
and $\binom{n}{n-2}=\binom{n}{2}$ ways to choose an $(n-2)$-subset $E$ of $F$. 
Varying $(i, j)$ among the $\binom{e}{2}$ choices, we get a total contribution of 
$$
\binom{e}{2} \binom{n}{2} \left(\frac{d}{e}\right)^2 
$$
many distinct $n$-subsets of $S$ that do not arise as $n$-subsets of $S_k$ for any $1\leqslant k \leqslant e$. Consequently,
$$
\binom{d+n}{n} - \left(e\cdot \binom{d/e + n}{n} - e + 1\right) \geqslant  \binom{e}{2} \binom{n}{2} \left(\frac{d}{e}\right)^2,
$$
leading to the lower bound
$$
M_{e} = \binom{d+n}{n} - e\cdot \binom{d/e + n}{n} \geq \binom{e}{2} \binom{n}{2} \left(\frac{d}{e}\right)^2 - e + 1,
$$
as claimed in the conclusion of Lemma~\ref{lem:1002}. 
\end{proof}

We will also need the following lower bound for the integers $M_e$ defined in Lemma~\ref{lem:1002}.

\begin{lemma}
\label{lem:1003}
If $n\geqslant 2$ and $d,q\geqslant 3$, then  for each divisor $e>1$ of $d$, we have: 
\begin{equation}
\label{eq:ineq.M_e}
M_e \geqslant  \frac{1}{4}\binom{n}{2} d^2 - d + 1.
\end{equation}
\end{lemma}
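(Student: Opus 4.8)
The plan is to deduce Lemma~\ref{lem:1003} from Lemma~\ref{lem:1002} by optimizing (or rather, bounding below) the expression $\binom{e}{2}\binom{n}{2}(d/e)^2 - e + 1$ over the divisors $e > 1$ of $d$. Writing $\binom{e}{2}(d/e)^2 = \frac{e-1}{2e}d^2$, the divisor-dependent factor is $\frac{e-1}{2e} = \frac12 - \frac{1}{2e}$, which is increasing in $e$; hence among divisors $e \geqslant 2$ it is smallest at $e = 2$, giving $\frac{e-1}{2e} \geqslant \frac14$. So the first term is at least $\frac14\binom{n}{2}d^2$, which already matches the claimed main term. The only remaining point is the trailing $-e+1$: since $e$ can be as large as $d$, the term $-e+1$ can be as small as $-d+1$, and this is exactly the worst case, yielding $M_e \geqslant \frac14\binom{n}{2}d^2 - d + 1$.

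The subtlety — and the one step requiring a moment's care — is that the two "worst cases" for the two terms are different: the factor $\frac{e-1}{2e}$ is worst at $e = 2$, while $-e+1$ is worst at $e = d$. A naive bound would substitute $e=2$ everywhere and get $\frac14\binom{n}{2}d^2 - 1$, which is too strong when $e$ is large, or substitute $e=d$ everywhere and lose too much in the quadratic term. So I would argue directly: for a fixed divisor $e > 1$, we have $\frac{e-1}{2e} \geqslant \frac14$ and separately $-e+1 \geqslant -d+1$ (since $e \leqslant d$), and adding these two inequalities term-by-term to the expression from Lemma~\ref{lem:1002} gives
\[
M_e \;\geqslant\; \binom{e}{2}\binom{n}{2}\Bigl(\frac{d}{e}\Bigr)^2 - e + 1 \;=\; \frac{e-1}{2e}\binom{n}{2}d^2 - e + 1 \;\geqslant\; \frac14\binom{n}{2}d^2 - d + 1,
\]
which is precisely \eqref{eq:ineq.M_e}.

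I do not expect any real obstacle here; the lemma is a purely arithmetic consequence of the previous one. The only thing to double-check is the algebraic identity $\binom{e}{2}(d/e)^2 = \frac{e-1}{2e}d^2$ and the monotonicity claim $\frac{e-1}{2e}$ increasing in $e$ (equivalently $1 - \frac1e$ increasing), both of which are immediate. One should also note that the hypotheses $n \geqslant 2$, $d, q \geqslant 3$ are inherited from Lemma~\ref{lem:1002} and are not otherwise used; in particular the bound is vacuous or weak for small $n$ but that is irrelevant to its correctness.
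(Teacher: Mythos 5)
Your argument is correct and is essentially the paper's own proof: both deduce the bound from Lemma~\ref{lem:1002} by noting $\binom{e}{2}\left(\frac{d}{e}\right)^2 = \left(\frac{1}{2}-\frac{1}{2e}\right)d^2 \geqslant \frac{1}{4}d^2$ for $e \geqslant 2$ and $-e+1 \geqslant -d+1$ for $e \leqslant d$. No issues.
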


\begin{proof}
The bound from \eqref{eq:ineq.M_e} follows from Lemma~\ref{lem:1002} using that 
$$
\binom{e}{2}\binom{n}{2}\left(\frac{d}{e}\right)^2 - e + 1 \geqslant  \left(\frac{1}{2} - \frac{1}{2e} \right) \binom{n}{2} d^2  - d + 1 \geqslant  \frac{1}{4}\binom{n}{2} d^2- d + 1
$$
since $d\geqslant  e\geqslant  2$.
\end{proof}


\begin{lemma} \label{lem.u2} Set $\displaystyle u_2 \colonequals \sum_{e|d, e>1} q^{-M_e}$.
If $n\geqslant  2$, $q\geqslant  3$, $d\geqslant  3$, then 
$$
u_2 \leqslant(d-1) q^{-\frac{1}{4}\binom{n}{2} d^2 + d -1}.$$
\end{lemma}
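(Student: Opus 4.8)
The plan is to bound the sum $u_2 = \sum_{e \mid d,\, e > 1} q^{-M_e}$ by applying the uniform lower bound on $M_e$ obtained in Lemma~\ref{lem:1003}. Since the exponent $-M_e$ is made smaller (i.e., $q^{-M_e}$ larger) precisely when $M_e$ is small, and Lemma~\ref{lem:1003} guarantees $M_e \geqslant \tfrac{1}{4}\binom{n}{2} d^2 - d + 1$ for every divisor $e > 1$ of $d$ under the stated hypotheses $n \geqslant 2$, $q \geqslant 3$, $d \geqslant 3$, each summand satisfies $q^{-M_e} \leqslant q^{-\frac{1}{4}\binom{n}{2}d^2 + d - 1}$.

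First I would invoke Lemma~\ref{lem:1003} to replace every term in the defining sum by this common upper bound. Then I would count the number of terms: the index set is $\{ e : e \mid d,\ e > 1\}$, which has $\tau(d) - 1$ elements, where $\tau(d)$ is the number of positive divisors of $d$. Crudely, $\tau(d) - 1 \leqslant d - 1$, since every divisor of $d$ lies in $\{1, 2, \ldots, d\}$ and at least one of these, namely $1$, is excluded (in fact one could do much better, but $d-1$ is all that is claimed and all that is needed). Combining these two observations gives
\[
u_2 = \sum_{e \mid d,\, e > 1} q^{-M_e} \leqslant (\tau(d) - 1)\, q^{-\frac{1}{4}\binom{n}{2}d^2 + d - 1} \leqslant (d-1)\, q^{-\frac{1}{4}\binom{n}{2}d^2 + d - 1},
\]
which is exactly the assertion of the lemma.

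There is essentially no obstacle here: the lemma is a routine packaging of Lemma~\ref{lem:1003} together with the trivial divisor count $\tau(d) \leqslant d$. The only point requiring the slightest care is making sure the hypotheses of Lemma~\ref{lem:1003} ($n \geqslant 2$, $q \geqslant 3$, $d \geqslant 3$) are in force, which they are by assumption, so that the lower bound on $M_e$ applies uniformly over all relevant $e$. The rest is bookkeeping.
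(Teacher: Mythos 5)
Your proposal is correct and matches the paper's own argument exactly: bound each summand via the uniform lower bound $M_e \geqslant \tfrac{1}{4}\binom{n}{2}d^2 - d + 1$ from Lemma~\ref{lem:1003}, and bound the number of divisors $e>1$ of $d$ by $d-1$. Nothing further is needed.
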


\begin{proof}
First, we note that the number of divisors $e$ of $d$ with $e>1$ is at most $d-1$. Thus the sum the right hand side of $\displaystyle u_2\colonequals \sum_{e|d, e>1} q^{-M_e}$
has at most $d-1$ terms. By Lemma~\ref{lem:1003}, each term $q^{-M_e}$ is at most 
$\displaystyle q^{- \, \frac{\, 1 \,}{4} \binom{n}{2} d^2 + d - 1}$. Lemma~\ref{lem.u2}
now tells us that
$$
u_2 \leqslant (d-1) q^{-\frac{1}{4}\binom{n}{2} d^2 + d -1},
$$
as desired.
\end{proof}

Finally, we set
\begin{equation}
\label{eq:upper-bound-on-t}
v_1 \colonequals \frac{3}{2} \, q^{-\frac{(n+d-1)n}{2} + (n+1)} +  (d-1) q^{-\frac{1}{4}\binom{n}{2} d^2 + d - 1},
\end{equation} 
when $n\geqslant 3$, $q\geqslant 3$ and $d\geqslant 3$,  and

\smallskip
\begin{equation}
\label{eq:upper-bound-on-t 2}
v_2 \colonequals \frac{29}{27}\,  q^{2-d} +  (d-1) q^{-\frac{1}{4} d^2 + d - 1},
\end{equation}
when $n=2$, $q\geqslant 3$ and $d\geqslant 6$.
We will establish next upper bounds for $v_2$ and $v_1$ (in this order).

\begin{lemma}
\label{lem:1004} 
For $n=2$, $q\geqslant  3$ and $d\geqslant  6$, we have $(d-1)qv_2 \leqslant 2$.
\end{lemma}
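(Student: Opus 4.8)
The plan is to prove the bound $(d-1)qv_2 \leqslant 2$ by splitting $v_2$ into its two summands and estimating each contribution separately, then combining. Recall that
\[
v_2 = \frac{29}{27}\, q^{2-d} + (d-1) q^{-\frac{1}{4}d^2 + d - 1},
\]
so that
\[
(d-1) q v_2 = \frac{29}{27}(d-1) q^{3-d} + (d-1)^2 q^{-\frac{1}{4}d^2 + d}.
\]
First I would handle the first term: show that $\frac{29}{27}(d-1)q^{3-d} \leqslant 1$ for $q \geqslant 3$ and $d \geqslant 6$. Since the function $d \mapsto (d-1)q^{3-d}$ is decreasing in $d$ for $d$ in this range (one checks the ratio of consecutive values, $\frac{d}{d-1}\cdot q^{-1} \leqslant \frac{6}{5}\cdot\frac13 < 1$), and decreasing in $q$, it suffices to evaluate at $q=3$, $d=6$, giving $\frac{29}{27}\cdot 5 \cdot 3^{-3} = \frac{145}{729} < 1$. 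So the first term is at most $1$ (with lots of room to spare).

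Next I would handle the second term: show that $(d-1)^2 q^{-\frac{1}{4}d^2 + d} \leqslant 1$ for $q \geqslant 3$, $d \geqslant 6$. The exponent $-\frac14 d^2 + d = -\frac14 d(d-4)$ is negative and increasingly so as $d$ grows; for $d = 6$ it equals $-3$, for $d=7$ it equals $-\frac{21}{4}$, etc. Again monotonicity in $q$ reduces to $q = 3$, and then one checks that $(d-1)^2 \cdot 3^{-\frac14 d(d-4)}$ is decreasing in $d$ for $d \geqslant 6$ (the $3^{-\frac14 d(d-4)}$ factor decays much faster than $(d-1)^2$ grows) and evaluate at $d=6$: $(5)^2 \cdot 3^{-3} = \frac{25}{27} < 1$. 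Hence the second term is also at most $1$.

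Combining the two estimates gives $(d-1)qv_2 \leqslant \frac{145}{729} + \frac{25}{27} = \frac{145 + 675}{729} = \frac{820}{729}$, which is less than $2$ (indeed less than $\frac{10}{9}$). This completes the proof. The only mildly delicate point is justifying the monotonicity claims that reduce everything to the boundary case $q = 3$, $d = 6$; these are routine since both terms are products of a polynomial in $d$ with an exponentially decaying factor $q^{-c(d)}$ where $c(d)$ grows, so the exponential wins, but I would spell out the ratio-of-consecutive-terms computation for the second summand since $-\frac14 d^2 + d$ is not linear in $d$. No genuine obstacle is expected here — the inequality is quite slack, and the bound $2$ (rather than, say, $\frac43$) is chosen only so that the same framework covers Lemma~\ref{lem:1004} and the analogous bound for $v_1$ uniformly.
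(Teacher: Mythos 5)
Your proposal is correct and follows essentially the same route as the paper: reduce to $q=3$ using the negativity of both exponents, reduce to $d=6$ by a monotonicity-in-$d$ check, and evaluate, obtaining exactly the paper's value $\Theta(3,6)=\frac{820}{729}\approx 1.125\leqslant 2$. Splitting the two summands and bounding each by $1$ separately is only a cosmetic variation of the paper's single evaluation of the combined expression.
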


\begin{proof}
Using \eqref{eq:upper-bound-on-t 2}, we write
$$
(d-1) v_2 q = \Theta(q, d) \colonequals (d-1) \left(\frac{29}{27} q^{3-d} +  (d-1) q^{-\frac{1}{4} d^2 + d }\right).
$$
For $d\geqslant 6$, both exponents in $q^{3-d}$ and $q^{-\frac{1}{4} d^2 + d }$ are negative. 
This yields $\Theta(q, d)\leqslant \Theta(3, d)$ for $q\geqslant 3$. We now view $\Theta(3, d)$
as a function of $d$, as $d$ ranges over the interval $[6, \infty)$. On this interval $\Theta(3, d)$ 
achieves its maximum at $d=6$. Thus, 
$(d-1) tq \leqslant \Theta(3, 6)\approx 1.125$. In particular, $(d-1)tq\leqslant 2$. 
\end{proof}

\begin{lemma}
\label{lem:1005} Assume that $n\geqslant  3$, $q\geqslant  3$ and $d\geqslant  3$. Then $(d-1) v_1q \leqslant 2$.
\end{lemma}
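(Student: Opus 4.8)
The plan is to mimic the proof of Lemma~\ref{lem:1004}, but now working with the expression
$$
(d-1) v_1 q = \Phi(n, q, d) \colonequals \frac{3}{2}(d-1)\, q^{-\frac{(n+d-1)n}{2} + n + 2} + (d-1)^2 q^{-\frac{1}{4}\binom{n}{2} d^2 + d}.
$$
First I would check that under the hypotheses $n \geqslant 3$, $q \geqslant 3$, $d \geqslant 3$, both exponents of $q$ appearing here are negative, so that $\Phi(n, q, d)$ is decreasing in $q$ and hence $\Phi(n, q, d) \leqslant \Phi(n, 3, d)$. For the first exponent, $-\tfrac{n(n+d-1)}{2} + n + 2 = -\tfrac{n(n+d-1) - 2n - 4}{2}$, and since $n \geqslant 3$, $d \geqslant 3$ gives $n(n+d-1) \geqslant 3 \cdot 5 = 15 > 2n + 4$ when $n = 3$, with the gap only widening as $n$ grows; a short monotonicity check handles all $n \geqslant 3$. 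For the second exponent, $-\tfrac14\binom{n}{2}d^2 + d \leqslant -\tfrac14 \cdot 3 \cdot 9 + 3 < 0$ since $\binom{n}{2} \geqslant 3$ and $d \geqslant 3$.

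Next I would bound each of the two terms of $\Phi(n, 3, d)$ separately and show their sum is $\leqslant 2$. For the second term, $(d-1)^2 3^{-\frac14\binom{n}{2}d^2 + d}$: using $\binom{n}{2} \geqslant 3$ the exponent is at most $-\tfrac34 d^2 + d$, which for $d \geqslant 3$ is $\leqslant -\tfrac34 \cdot 9 + 3 = -\tfrac{15}{4}$, and one checks $-\tfrac34 d^2 + d$ decreases for $d \geqslant 3$; since $(d-1)^2$ grows only polynomially while $3^{-\frac34 d^2 + d}$ decays, the product is maximized at $d = 3$, giving $4 \cdot 3^{-15/4} \approx 0.057$, well under control. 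For the first term, $\tfrac32(d-1)\, 3^{-\frac{n(n+d-1)}{2} + n + 2}$: for fixed $n$ the exponent is linear and decreasing in $d$, and again the polynomial factor $(d-1)$ is dominated, so the term is maximized at $d = 3$, yielding $\tfrac32 \cdot 2 \cdot 3^{-\frac{n(n+2)}{2} + n + 2} = 3 \cdot 3^{-\frac{n^2}{2} + 2}$, which for $n \geqslant 3$ is at most $3 \cdot 3^{-9/2 + 2} = 3 \cdot 3^{-5/2} \approx 0.19$. Adding the two bounds gives $\Phi(n, q, d) \lesssim 0.25 < 2$.

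The one genuine subtlety — the "main obstacle", though it is mild — is that both terms involve the dimension parameter $n$, which is unbounded, so I cannot simply evaluate at a single triple $(n, q, d)$ as in Lemma~\ref{lem:1004}. I need to argue monotonicity (or at least boundedness) in $n$ as well. The cleanest route is: after reducing to $q = 3$ and $d = 3$ by the monotonicity arguments above, observe that the resulting expression $3 \cdot 3^{-n^2/2 + 2} + 4 \cdot 3^{-3\cdot\binom{n}{2}/2 \cdot \text{(something)}}$ is manifestly decreasing in $n$, so its value at $n = 3$ is the global maximum over the allowed range. I would double-check the boundary case $d=3$, $q=3$, $n=3$ numerically to confirm the final sum is comfortably below $2$ (it is roughly $0.25$), which also leaves ample slack to absorb any small slips in the estimates. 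With that, $(d-1) v_1 q \leqslant 2$, completing the proof of Lemma~\ref{lem:1005}.

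\begin{proof}
Using \eqref{eq:upper-bound-on-t}, write
$$
(d-1) v_1 q = \Phi(n, q, d) \colonequals \frac{3}{2}(d-1)\, q^{-\frac{n(n+d-1)}{2} + n + 2} + (d-1)^2 q^{-\frac{1}{4}\binom{n}{2} d^2 + d}.
$$
We first check that, for $n \geqslant 3$, $q \geqslant 3$, $d \geqslant 3$, both exponents of $q$ are negative. For the second exponent, $\binom{n}{2} \geqslant 3$, so $-\tfrac14\binom{n}{2}d^2 + d \leqslant -\tfrac34 d^2 + d < 0$ for $d \geqslant 3$. For the first exponent, $-\tfrac{n(n+d-1)}{2} + n + 2 = -\tfrac{n(n+d-3) - 4}{2}$; since $n \geqslant 3$ and $d \geqslant 3$ we have $n(n+d-3) \geqslant 3n \geqslant 9 > 4$, so this exponent is negative as well. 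Consequently $\Phi(n, q, d)$ is decreasing in $q$, and $\Phi(n, q, d) \leqslant \Phi(n, 3, d)$.

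Next we bound the two terms of $\Phi(n, 3, d)$. For the second term, using $\binom{n}{2} \geqslant 3$ we get
$$
(d-1)^2 3^{-\frac14\binom{n}{2}d^2 + d} \leqslant (d-1)^2 3^{-\frac34 d^2 + d}.
$$
The exponent $-\tfrac34 d^2 + d$ is decreasing for $d \geqslant 1$, and the polynomial factor $(d-1)^2$ is dominated by the exponential decay, so for $d \geqslant 3$ the right-hand side is at most its value at $d = 3$, namely $4 \cdot 3^{-15/4} < \tfrac{1}{10}$. For the first term, for fixed $n$ the exponent $-\tfrac{n(n+d-1)}{2} + n + 2$ is linear and strictly decreasing in $d$, so again for $d \geqslant 3$,
$$
\frac32 (d-1)\, 3^{-\frac{n(n+d-1)}{2} + n + 2} \leqslant \frac32 \cdot 2 \cdot 3^{-\frac{n(n+2)}{2} + n + 2} = 3 \cdot 3^{-\frac{n^2}{2} + 2},
$$
where we also used that $(d-1) 3^{-\frac{nd}{2}}$ is decreasing in $d$ for $n \geqslant 3$. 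The quantity $3 \cdot 3^{-n^2/2 + 2}$ is decreasing in $n$, so for $n \geqslant 3$ it is at most $3 \cdot 3^{-5/2} < \tfrac15$.

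Combining the two bounds, $\Phi(n, q, d) \leqslant \Phi(n, 3, d) \leqslant \tfrac15 + \tfrac{1}{10} < 2$ for all $n \geqslant 3$, $q \geqslant 3$, $d \geqslant 3$. Hence $(d-1) v_1 q \leqslant 2$, as claimed.
\end{proof}
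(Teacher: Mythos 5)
Your proof is correct and follows essentially the same route as the paper: reduce by monotonicity of the (negative) exponents to the corner case $q=3$, $d=3$, $n=3$ and check numerically that the resulting value is far below $2$. The only cosmetic difference is that you bound the two terms of $(d-1)v_1q$ separately and handle the $n$-monotonicity at the end, whereas the paper substitutes $n=3$ first and then maximizes the combined expression $\Psi(3,d)$ over $d\geqslant 3$.
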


\begin{proof}
We argue as in the proof of Lemma~\ref{lem:1004}. For $n\geqslant  3$, the definition of $v_1$ from~\eqref{eq:upper-bound-on-t} implies 
$$
v_1 \leqslant 1.5 q^{4-\frac{3}{2}(d+2)} +  (d-1) q^{-\frac{3}{4} d^2 + d - 1}.
$$
where we have substituted $n=3$ in \eqref{eq:upper-bound-on-t}. Consequently,
$$
(d-1) v_1q  \leqslant \Psi(q, d) \colonequals  (d-1) \left( 1.5 q^{5-\frac{3}{2}(d+2)} +  (d-1) q^{-\frac{3}{4} d^2 + d }\right)
$$
We have $\Psi(q, d)\leqslant \Psi(3, d)$ for $q\geqslant 3$. Viewing $\Psi(3, d)$ as a function of $d$ and letting $d$ range over the interval $[3, \infty)$, we see that $\Psi(3, d)$ achieves its maximum on this interval when $d=3$. Thus, $(d-1) tq \leqslant \Psi(3, 3)\approx 0.257$. In particular, $(d-1)v_1q\leqslant 2$, as desired. 
\end{proof}


\section{Proof of Proposition~\ref{prop:sharper-bounds}} 
\label{sect.count}

Following Poonen~\cite{Poo04}*{Proof of Proposition 2.7}, we will write
\begin{equation} \label{e.t} t = t_1 + t_2 
\end{equation}
and estimate $t_1$ and $t_2$ separately. Here 

\begin{itemize}
\item $t_1$ is the proportion of hypersurfaces of degree $d$ in $\mathbb P^n$ defined over $\mathbb{F}_q$, which are 
reducible over $\mathbb F_q$, and

\item $t_2$ is the proportion of hypersurfaces of degree $d$ in $\mathbb P^n$ defined over $\mathbb{F}_q$, which are irreducible 
over $\mathbb F_q$ but reducible over $\mathbb{F}_{q^e}$ for some integer $e > 1$, dividing $d$.
\end{itemize}

\begin{lemma} \label{lem.t1} 
(a) Assume $n=2$, $q\geqslant 3$ and $d\geqslant 6$. Then $\displaystyle t_1 \leqslant \frac{29}{27} q^{2-d}$.

\smallskip
(b) Assume $n\geqslant  3$, $q\geqslant  3$, and $d\geqslant  3$. Then $\displaystyle t_1\leqslant 
\frac{3}{2}q^{-\frac{n(n+d-1)}{2}+n+1}$.
\end{lemma}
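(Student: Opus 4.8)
The plan is to bound $t_1$, the proportion of degree $d$ hypersurfaces in $\mathbb{P}^n$ over $\mathbb{F}_q$ that are reducible over $\mathbb{F}_q$, by a union bound over the ways a degree $d$ form can factor. A degree $d$ form $f$ is reducible over $\mathbb{F}_q$ precisely when $f = gh$ for some forms $g, h$ over $\mathbb{F}_q$ of degrees $i$ and $d-i$ with $1 \leqslant i \leqslant \lfloor d/2 \rfloor$. Thus the reducible forms are contained in the union, over $i$ from $1$ to $\lfloor d/2 \rfloor$, of the images of the multiplication maps $\{\deg i\} \times \{\deg (d-i)\} \to \{\deg d\}$. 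Counting projectively (working in $\mathbb{P}(V)$), the image of the $i$-th multiplication map has dimension at most $\binom{n+i}{n} - 1 + \binom{n+d-i}{n} - 1 = \binom{n+i}{n} + \binom{n+d-i}{n} - 2$, so the proportion of degree $d$ hypersurfaces arising this way is at most $q^{-N_i}$ with $N_i$ as in~\eqref{eq:N_i} (up to lower-order corrections that one absorbs; I would be a little careful here, comparing point counts of the affine cones rather than projective spaces, since $|\mathbb{P}^k(\mathbb{F}_q)| = q^k + \cdots + 1 < \tfrac{q^{k+1}}{q-1}$, and the ambient space has $\tfrac{q^m-1}{q-1}$ points). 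Summing over $i$ gives $t_1 \leqslant \sum_{i=1}^{\lfloor d/2\rfloor} q^{-N_i} = u_1$, in the notation of Lemma~\ref{lem.u1}.

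Once this reduction $t_1 \leqslant u_1$ is in place, parts (a) and (b) are immediate from Lemma~\ref{lem.u1}(a) and~(b) respectively: under the hypotheses $n = 2$, $q \geqslant 3$, $d \geqslant 6$ we get $t_1 \leqslant u_1 \leqslant \tfrac{29}{27} q^{2-d}$, and under $n \geqslant 3$, $q \geqslant 3$, $d \geqslant 3$ we get $t_1 \leqslant u_1 \leqslant \tfrac{3}{2} q^{-\frac{n(n+d-1)}{2} + n + 1}$. So the combinatorial content has already been extracted in Section~\ref{sec:comb}; this lemma is the bridge between the geometry (dimension of the locus of reducible forms) and those bounds.

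The step I expect to require the most care is making the dimension/counting estimate for the image of the multiplication map rigorous with the correct constant — one wants an honest inequality of the form (number of reducible degree $d$ forms over $\mathbb{F}_q$) $\leqslant \sum_i (\text{number of degree } i \text{ forms}) \cdot (\text{number of degree } d-i \text{ forms}) / (\text{normalization})$, and one must be slightly attentive that a reducible form can have several factorizations (which only helps, since we are overcounting) and that scaling a form by $\mathbb{F}_q^\times$ must be handled consistently on both sides. A clean way is: the number of \emph{nonzero} degree $d$ forms that factor as a product of a degree $i$ and a degree $d-i$ form is at most $(q^{\binom{n+i}{n}} - 1)(q^{\binom{n+d-i}{n}} - 1)$, while the total number of nonzero degree $d$ forms is $q^m - 1$; dividing and using $\binom{n+i}{n} + \binom{n+d-i}{n} - m = -N_i$ (together with the elementary inequality $(q^a - 1)(q^b-1) \leqslant q^{a+b} - 1$ for $a, b \geqslant 1$) yields exactly the $i$-th summand $q^{-N_i}$. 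Summing and invoking Lemma~\ref{lem.u1} finishes the proof.
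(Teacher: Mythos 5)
Your proof is correct and takes essentially the same route as the paper: the paper simply cites the proof of Poonen's Proposition~2.7 for the inequality $t_1 \leqslant \sum_{i=1}^{\lfloor d/2\rfloor} q^{-N_i}$ and then invokes Lemma~\ref{lem.u1}(a) and (b), exactly as you do, and your counting derivation of that inequality is the same union-bound argument underlying Poonen's proof. The only step to make explicit is that the final division uses $q^{a+b}-1 \leqslant q^{a+b-m}(q^{m}-1)$, which requires $a+b \leqslant m$, i.e.\ $N_i \geqslant 0$; this does hold for $1 \leqslant i \leqslant \lfloor d/2\rfloor$ and $n \geqslant 2$ (e.g.\ by the estimates on $N_1$ and Lemma~\ref{lem:1001}).
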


\begin{proof} It is shown in the proof of \cite{Poo04}*{Proposition 2.7} that 
\begin{equation} \label{e.t1}
t_1 \leqslant \sum_{i=1}^{\lfloor d/2\rfloor} q^{-N_i},
\end{equation}
where $\displaystyle N_i = \binom{n+d}{d} - \binom{n+i}{n} - \binom{n+d-i}{n}$, as in~\eqref{eq:N_i}.
Parts (a) and (b) now follow from Lemma~\ref{lem.u1}(a) and (b), respectively. (Note that the right hand side of the inequality~\eqref{e.t1} is denoted by $u_1$ in the statement of Lemma~\ref{lem.u1}.)
\end{proof}

Next, we prove a lower bound on the proportion $t_2$ of hypersurfaces which are irreducible but not geometrically irreducible. 
 
\begin{lemma} \label{lem.t2} Let $n\geqslant  2$, $q\geqslant  3$, $d\geqslant  3$, we have
$\displaystyle
t_2 \leqslant(d-1) q^{-\frac{1}{4}\binom{n}{2} d^2 + d -1}$.
\end{lemma}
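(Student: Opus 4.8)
The plan is to follow Poonen's argument from \cite{Poo04}*{Proof of Proposition 2.7} and reduce the bound on $t_2$ to the combinatorial estimate for $u_2$ already established in Lemma~\ref{lem.u2}. First I would recall the setup: a degree $d$ hypersurface $X\subset\mathbb P^n$ over $\mathbb F_q$ which is irreducible over $\mathbb F_q$ but geometrically reducible must, after base change to $\overline{\mathbb F_q}$, split as a product of $e$ distinct conjugate irreducible factors, each of degree $d/e$, where $e>1$ is a divisor of $d$ (the factors are permuted transitively by $\Gal(\overline{\mathbb F_q}/\mathbb F_q)$, and since $X$ is defined over $\mathbb F_q$ the full product is Galois-stable). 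In particular $X$ becomes reducible already over $\mathbb F_{q^e}$, and the defining form $f$ of $X$ is, up to scalar, the norm from $\mathbb F_{q^e}$ to $\mathbb F_q$ of an irreducible form $g$ of degree $d/e$ defined over $\mathbb F_{q^e}$.

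Next I would count. The set of degree $d/e$ forms in $n+1$ variables over $\mathbb F_{q^e}$ is a vector space of dimension $\binom{d/e+n}{n}$ over $\mathbb F_{q^e}$, hence has at most $(q^e)^{\binom{d/e+n}{n}} = q^{e\binom{d/e+n}{n}}$ elements; this is an overcount of the forms $g$ that can occur, and the map $g\mapsto \prod_{j=0}^{e-1} g^{(q^j)}$ (product of conjugates) is at most $(d/e)$-to-one onto its image up to scalars — but for an upper bound it suffices to say each geometrically reducible $X$ of "type $e$" arises from at least one such $g$, so the number of degree $d$ forms over $\mathbb F_q$ that are norms from level $e$ is at most $q^{e\binom{d/e+n}{n}}$. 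Dividing by the total number $q^{\binom{d+n}{n}}$ of degree $d$ forms over $\mathbb F_q$ (up to the same scalar normalization, which cancels), the proportion coming from a fixed $e$ is at most $q^{e\binom{d/e+n}{n}-\binom{d+n}{n}} = q^{-M_e}$, with $M_e$ exactly as defined in Lemma~\ref{lem:1002}. Summing over all divisors $e>1$ of $d$ gives $t_2\leqslant \sum_{e\mid d,\,e>1} q^{-M_e} = u_2$, and then Lemma~\ref{lem.u2} yields $t_2\leqslant (d-1)q^{-\frac14\binom n2 d^2+d-1}$, which is the claim.

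The main obstacle I expect is making the counting bookkeeping precise: one must be careful about projective versus affine forms (the scalar ambiguity), about the fact that $g$ need not itself be geometrically irreducible a priori (but this only inflates the count, so it is harmless for an upper bound), and about the possibility that the same $X$ is counted for several values of $e$ (again harmless, since we sum and want an upper bound). None of these is a genuine difficulty — they are exactly the points Poonen handles — so the real content is simply invoking Lemma~\ref{lem.u2}, whose proof already did the substantive combinatorial work via Lemmas~\ref{lem:1002} and~\ref{lem:1003}.
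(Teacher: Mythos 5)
Your proposal is correct and follows essentially the same route as the paper: the paper simply cites the proof of \cite{Poo04}*{Proposition 2.7} for the inequality $t_2 \leqslant \sum_{e\mid d,\, e>1} q^{-M_e} = u_2$ and then applies Lemma~\ref{lem.u2}, whereas you additionally sketch Poonen's norm-counting argument behind that inequality (correctly, including the harmless scalar and overcounting issues). No gaps to report.
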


\begin{proof}
It is shown in the proof of \cite{Poo04}*{Proposition 2.7} that
\begin{equation} \label{e.t2}
t_2 \leqslant\sum_{e|d, e>1} q^{-M_e}
\end{equation}
where $\displaystyle M_e = \binom{d+n}{n}-e\binom{d/e+n}{n}$.  The desired conclusion now follows from Lemma~\ref{lem.u2}. (Note that the right hand side of the inequality~\eqref{e.t2} is denoted by $u_2$ in the statement of Lemma~\ref{lem.u2}.)
\end{proof}

We are finally ready to finish the proof of Proposition~\ref{prop:sharper-bounds}.

\begin{proof}[Proof of Proposition~\ref{prop:sharper-bounds}.]
Writing $t = t_1 + t_2$, as in~\eqref{e.t} and 
using Lemma~\ref{lem.t1} and Lemma~\ref{lem.t2}, we obtain
$$
t \leqslant \frac{3}{2} \,  q^{-\frac{(n+d-1)n}{2} + (n+1)} +  (d-1) q^{-\frac{1}{4}\binom{n}{2} d^2 + d - 1}
$$
when $n\geqslant 3$, $q\geqslant 3$ and $d\geqslant 3$, while 
$$
t \leqslant \frac{29}{27}\,  q^{2-d} +  (d-1) q^{-\frac{1}{4} d^2 + d - 1},
$$
when $n=2$, $q\geqslant 3$ and $d\geqslant 6$. Note that the right hand sides of these inequalities are precisely the quantities $v_1$ and $v_2$ from~\eqref{eq:upper-bound-on-t} 
and~\eqref{eq:upper-bound-on-t 2}.  
The desired conclusion, \[ (d-1)r q \leqslant 2, \] now follows from Lemmas~\ref{lem:1005}~and~\ref{lem:1004}, respectively.
\end{proof}

\section{Conclusion of the proof of Theorem~\ref{thm:main}}
\label{sect.finite}

The case when $F$ is infinite is examined in Section~\ref{sect.infinite}. Thus we will assume that $F = \mathbb{F}_q$ and $E = \mathbb{F}_{q^m}$ are finite fields. The case where $q > d$ is handled in Proposition~\ref{prop:q>d}. Hence, from now on, we assume that $q\leqslant d$.

We follow the strategy outlined in Section~\ref{sect.strategy}. Recall the notation we used there:

\smallskip
\begin{itemize}
\item $\mathcal{H}$ denotes the union of all degree $d$ hypersurfaces in $\mathbb{P}^n$ defined over $\mathbb{F}_q$, and

\smallskip
\item $t$ denotes the fraction of these hypersurfaces which are \emph{not} geometrically irreducible.  
\end{itemize}

\smallskip
Our goal is to show that there exists an $\mathbb F_{q^m}$-point in $\mathbb P^n$ which does not lie on $\mathcal{H}$.
As the total number of hypersurfaces of degree $d$ defined over $\mathbb{F}_q$ is $\displaystyle q^{m-1}+...+q+1 = \frac{q^m-1}{q-1}$, 
there are exactly $\displaystyle t\left(\frac{q^m-1}{q-1}\right)$ hypersurfaces of degree $d$ which are geometrically reducible. 
Using the upper bounds~\eqref{e.C-M} and~\eqref{e.serre} on the number of points of a hypersurface of degree $d$, 
we obtain the following inequality:
\begin{align*}
\# \mathcal{H}(\mathbb{F}_{q^m}) \leqslant & \left( \frac{q^{m}-1}{q-1} \right)\cdot ( (1-t)((q^{m(n-1)} + \cdots + q^m + 1) + (d-1)(d-2) q^{m(n-3/2)} \\ & + 5 d^{13/3} q^{m(n-2)}) + t (d q^{m(n-1)} + q^{m(n-2)} + \cdots + q^m + 1) ),
\end{align*}
where $m  \colonequals \binom{n+d}{n}$. After some cancellations, we can bound the term in the parenthesis after $\displaystyle \frac{q^m-1}{q-1}$ from above by
\begin{align}\label{eq:inside-parenthesis}
& (1+(d-1)t) q^{m(n-1)} + q^{m(n-2)} + ... + q^m+1 \\ 
& + (d-1)(d-2) q^{m(n-3/2)} + 5d^{13/3} q^{m(n-2).} \nonumber
\end{align}
By Proposition~\ref{prop:sharper-bounds}, we have 
\begin{equation} \label{e.prop6}
(d-1)t \leqslant \frac{2}{q},
\end{equation}
for all $n\geqslant 3$, $d\geqslant 3$ and $q\geqslant  3$, or $n=2$, $q\geqslant 3$ and $d\geqslant 6$. Since we already know  that Theorem~\ref{thm:main} holds when $q>d$ (see Proposition~\ref{prop:q>d}), we may assume that the inequality~\eqref{e.prop6}
holds unless $(n,q,d)$ equals $(2,3,3)$, $(2,3,4)$, $(2,3,5)$, $(2,4,4)$, $(2,4,5)$ and $(2,5,5)$. These exceptional cases will 
be handled using a computer at the end of the proof; we ignore them for now. Next, we bound the lower-order terms in the expression ~\eqref{eq:inside-parenthesis}.

\smallskip
\textbf{Claim.} If $n\geqslant  2$, $q\geqslant 3$ and $d\geqslant 3$, then we have  \begin{align*}
    (d-1)(d-2) q^{m(n-3/2)} + (q^{m(n-2)} + \cdots + q^m+1) + 5d^{13/3} q^{m(n-2)} <  q^{m(n-1)-1}
\end{align*} 
In order to verify this inequality, we first note that
\begin{equation}
\label{eq:lower_2}
q^{m(n-2)}+\cdots + q^m+1=\frac{q^{m(n-1)}-1}{q^m-1}< \frac{q^{m(n-1)}}{q^m-1}< \frac{q^{m(n-1)}}{1000q},
\end{equation}
since $q\geqslant 3$ and $m\geqslant (d+2)(d+1)/2\geqslant 10$ because $d\geqslant 3$. Employing \eqref{eq:lower_2}, we see that
the left-hand side of the inequality in the Claim is less than
\begin{equation}
\label{eq:lower_3}
(d-1)(d-2)q^{m(n-3/2)}+\frac{q^{m(n-1)-1}}{1000}+ 5d^{13/3}q^{m(n-2)}.
\end{equation}  
Dividing the expression from \eqref{eq:lower_3} by $q^{m(n-1)-1}$, we can easily check 
$$(d-1)(d-2)q^{1-m/2}+\frac{1}{1000}+5d^{13/3}q^{1-m}<1,$$
keeping in mind that $q\geqslant 3$ and $m\geqslant (d+2)(d+1)/2$, while $d\geqslant 3$. This completes the proof of the Claim.

Combining the Claim with the inequality~\eqref{e.prop6},
the quantity in \eqref{eq:inside-parenthesis} is less than
\begin{align*}
\left(1 + \frac{2}{q} \right) q^{m(n-1)} +  q^{m(n-1)-1} < q^{m(n-1)}  + 3 q^{m(n-1)-1}.
\end{align*}
Thus, we obtain the following upper bound on $\# \mathcal{H}(\mathbb{F}_{q^m})$.
$$
\# \mathcal{H}(\mathbb{F}_{q^m}) < \left(\frac{q^m-1}{q-1}\right) \left(q^{m(n-1)}  + 3 q^{m(n-1)-1}\right) 
$$
To show that $\mathcal{H}$ does not pass through every $\mathbb{F}_{q^m}$-point in $\mathbb{P}^n$, it is enough to show
that
$$
\left(\frac{q^m-1}{q-1}\right) \left(q^{m(n-1)}  + 3q^{m(n-1)-1}\right)  \leqslant q^{mn},
$$
because $\# \mathbb{P}^n(\mathbb{F}_{q^m}) = q^{mn}+\cdots+q^m+1$. By replacing $q^{m}-1$ with $q^m$ on the left-hand-side, we claim that the stronger inequality holds:
$$
q^{m} (q^{m(n-1)}+ 3q^{m(n-1)-1}) \leqslant q^{mn+1} - q^{mn}.
$$
After cancelling out $q^{mn-1}$ from both sides, it remains the show,
$$
q + 3 \leqslant q^{2} - q.
$$
This last inequality $q^2-2q-3 \geqslant  0$ is valid for all $q\geqslant  3$. Therefore, we have established Theorem~\ref{thm:main}
with $F = \mathbb F_q$ and $E = \mathbb F_{q^m}$, for all triples $(n, q, d)$ with $n\geqslant  2$, $q\geqslant  3$, $d \geqslant 1$, and  
$(n, q, d) \neq (2, 3, 3)$, $(2, 3, 4)$, $(2,3,5)$, $(2,4,4)$, $(2,4,5)$, $(2,5,5)$.

We now complete the proof of Theorem~\ref{thm:main} by a computer-assisted computation in these six exceptional cases. For each of the exceptional triples $(n, q, d)$, it suffices to find a single point $P\in \mathbb{P}^2(\mathbb{F}_{q^m})$ such that $P$ does not lie on any degree $d$ hypersurface defined over $\mathbb{F}_q$. Here $m=\binom{n+d}{n}$.

\medskip
When $(n, q, d) = (2, 3, 3)$ we write $\mathbb{F}_{3^{10}}$ as $\mathbb{F}_3[a]/(a^{10}+a^4+a+1)$, and check that $P=(a:a^8:1)$ does not lie on any cubic plane curve defined over $\mathbb{F}_3$.

When $(n, q, d) = (2, 3, 4)$, we write $\mathbb{F}_{3^{15}}$ as $\mathbb{F}_3[a]/(a^{15}+a^2-1)$
and check that $P=(a:a^9:1)$ does not lie on any quartic plane curve defined 
over $\mathbb{F}_{3}$. 

When $(n, q, d) = (2, 3, 5)$, we write $\mathbb{F}_{3^{21}}$ as $\mathbb{F}_3[a]/(a^{21}+a^{16}-1)$ and check that $P=(a:a^{18}:1)$ does not lie on any quintic plane curve defined over $\mathbb{F}_{3}$.

When $(n, q, d) = (2, 4, 4)$, we write $\mathbb{F}_{4^{15}}$ as $\mathbb{F}_{4}[a]/(a^{15}+a+1)$ and check that $P=(a^3:a^8:1)$ does not lie on any quartic plane curve defined over $\mathbb{F}_{4}$. 

When $(n, q, d) = (2, 4, 5)$, we write $\mathbb{F}_{4^{21}}$ as $\mathbb{F}_{4}[a]/(a^{21}+a^2+1)$ and check that $P=(a^6:a^{11}:1)$ does not lie on any quintic plane curve defined over $\mathbb{F}_{4}$. 

When $(n, q, d) = (2, 5, 5)$, we write $\mathbb{F}_{5^{21}}$ as $\mathbb{F}_{5}[a]/(a^{21}+a^{18}+a^{14}+1)$ and check that $P=(a:a^9:1)$ does not lie on any quintic plane curve defined over $\mathbb{F}_{5}$. \qed

\section{Proof of Theorem~\ref{thm:irreducible}}\label{sect:irreducible-linear-systems}

We will first construct the linear systems $\mathcal{L}_{\rm red}$ and $\mathcal{L}_{\rm irr}$ in parts (a) and (c), then use them to prove parts (b) and (d). 
We will use the notation from the statement of Theorem~\ref{thm:irreducible} throughout this section: $d$ and $n$ are positive integers,
$$m \colonequals \binom{n + d}{n} \quad \quad \text{and} \quad \quad
r \colonequals \binom{n + d - 1}{n}.$$

\medskip
(a) We take $\mathcal{L}_{\rm red}$ to be the linear system of hypersurfaces of degree $d$ in $\mathbb P^n$ containing a fixed hyperplane $H$. Let us say, $H$ is the hyperplane given by $x_0 = 0$. Then $\mathcal{L}_{\rm red}$ consists of polynomials of the form $x_0 F(x_0, x_1, \ldots, x_n)$, where $F(x_0, x_1, \ldots, x_n)$ is a polynomial of degree $d-1$ in $x_0, x_1, \ldots, x_n$. (Note that we are using the assumption that $d \geqslant 2$ to conclude that any polynomial of this form is reducible.) The dimension of $\mathcal{L}_{\rm red}$ is thus equal to the dimension of the linear system of homogeneous polynomials $F(x_1, \ldots, x_n)$ of degree $d-1$ in $x_1, \ldots, x_n$. In other words, $\dim(\mathcal{L}_{\rm red}) = r-1$.

\smallskip
(c) We apply Theorem~\ref{thm:main} for degree $d-1$ hypersurfaces in $\mathbb{P}^n$. Note that as we replace $d$ by $d-1$ in Theorem~\ref{thm:main}, $m$ gets replaced by $r$.
We obtain a point $P\in \mathbb{P}^n(\mathbb{F}_{q^r})$ 
that is not contained in any hypersurface of degree $d-1$ defined over $\mathbb{F}_q$. Clearly, $P$ is also not contained in any hypersurface of degree \emph{at most} $d-1$. Let $S=\{P_1, \cdots, P_r\}$  be the orbit of $P$ under $\Gal(\mathbb{F}_{q^r}/\mathbb{F}_q)$, where $P_1 = P$. Consider the vector space $V_{S}$ of degree $d$ forms defined over $\mathbb{F}_q$, which vanish at the point $P$ (and therefore at each point of $S$). Since vanishing at each additional point imposes at most one new linear condition, we obtain
$\operatorname{dim} V_{S} \geqslant  m  - r$.
Pick linearly independent forms $f_0, f_1, ..., f_{m-1-r} \in V_S$ and consider the $(m-1-r)$-dimensional linear system $\mathcal{L}_{\rm irr}=\langle f_0, f_1, ..., f_{m- 1 - r} \rangle$ of degree $d$ hypersurfaces. 

It remains to show that each $\mathbb{F}_q$-member of $\mathcal{L}_{\rm irr}$ is irreducible over $\mathbb{F}_q$. Indeed, assume the contrary: we factor $f$ as $f = g\cdot h$, where $g, h \in \mathbb F_q[x_0, \ldots, x_n]$ are homogeneous polynomials of degree at most $d-1$. Since $f(P)=0$, we have $g(P)=0$ or $h(P)=0$. This leads to a contradiction, because $P$ does not lie on a hypersurface in $\mathbb P^n$ of degree at most $d-1$ defined over $\mathbb{F}_q$. Thus, every $\mathbb F_q$-member of $\mathcal{L}_{\rm irr}$ is irreducible over $\mathbb{F}_q$.

\smallskip
(b) Suppose $\mathcal{L}$ is a linear system of hypersurfaces of degree $d$ in $\mathbb P^n$ of dimension $\displaystyle r$.
Then $\mathcal{L}$ and $\mathcal{L}_{\rm irr}$ intersect non-trivially in $\mathbb P^{m-1}$. An $\mathbb{F}_q$-member of $\mathcal{L}$ corresponding to the $\mathbb{F}_q$-point of intersection is irreducible over $\mathbb{F}_q$.

\smallskip
(d) Similarly, if $\mathcal{L}$ is a linear system of hypersurfaces of degree $d$ in $\mathbb P^n$ of dimension $\geqslant m - r$, then $\mathcal{L}$ and $\mathcal{L}_{\rm red}$ intersect non-trivially in $\mathbb P^{m-1}$.  An $\mathbb{F}_q$-member of $\mathcal{L}$ corresponding to an $\mathbb{F}_q$-point of intersection is reducible over $\mathbb{F}_q$.
\qed

\section{A variant of Theorem~\ref{thm:irreducible}
over an algebraically closed field}
\label{sect:irred-alg-closed}


In this section we prove a variant of Theorem~\ref{thm:irreducible}, where the finite field $\mathbb F_q$ is replaced by an algebraically closed field $F$. As we mentioned in the Introduction, parts (a) and (b) of Theorem~\ref{thm:irreducible} remain valid in this setting, whereas the dimensions in parts (c) and (d) get reduced by $n$.

\begin{proposition} \label{prop:irreducible}
Let $n, d \geqslant 2$ be integers, $m = \binom{n+d}{n}$,  $r = \binom{n+d-1}{n}$,
and $F$ be an algebraically closed field. 

\smallskip
(a) There exists an $(r-1)$-dimensional $F$-linear system $\mathcal{M}_{\rm red}$ of degree $d$ hypersurfaces in $\mathbb P^n$ such that every $F$-member of $\mathcal{L}_{\rm red}$ is reducible over $F$.

\smallskip
(b) Every $F$-linear system $\mathcal{L}$ of dimension $\geqslant r$ has an $F$-member which is irreducible over $F$.

\smallskip
(c) There exists an $(m - r - n-1)$-dimensional $F$-linear system $\mathcal{L}_{\rm irr}$ of degree $d$ hypersurfaces in $\mathbb P^n$ such that every ${F}$-member of $\mathcal{L}_{\rm irr}$ is irreducible.

\smallskip
(d) Let $\mathcal{L}$ be an $F$-linear system of degree $d$ hypersurfaces in $\mathbb P^n$. 
If $\dim(\mathcal{L}) \geqslant m - r - n$, then $\mathcal{L}$ has a reducible $F$-member.
\end{proposition}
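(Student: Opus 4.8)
The plan is to build the systems $\mathcal{M}_{\rm red}$ and $\mathcal{L}_{\rm irr}$ of parts (a) and (c) first, then deduce (b) and (d) from them, using throughout one geometric input: the dimension of the locus of reducible forms. Write $\mathbb{P}(V) = \mathbb{P}^{m-1}$ for the space of degree $d$ forms, $V_e$ for the space of degree $e$ forms, and for $1 \leqslant e \leqslant d-1$ let $\mu_e \colon \mathbb{P}(V_e) \times \mathbb{P}(V_{d-e}) \to \mathbb{P}(V)$, $([g],[h]) \mapsto [gh]$, be the multiplication map; it is a morphism because $gh \neq 0$, and in fact a \emph{finite} morphism since it is proper and, by unique factorization in $F[x_0, \dots, x_n]$, quasi-finite. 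Hence $R_e \colonequals \mu_e(\mathbb{P}(V_e) \times \mathbb{P}(V_{d-e}))$ is closed of dimension $\binom{n+e}{n} + \binom{n+d-e}{n} - 2$, and the locus of reducible forms $R \colonequals \bigcup_{e=1}^{d-1} R_e$ has, by convexity of $e \mapsto \binom{n+e}{n}+\binom{n+d-e}{n}$ on $\{1,\dots,d-1\}$, dimension $\dim R = \dim R_1 = n + r - 1$.

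Part (a) is exactly the construction of Theorem~\ref{thm:irreducible}(a), valid over any field: I would take $\mathcal{M}_{\rm red}$ to be the degree $d$ hypersurfaces containing $\{x_0 = 0\}$, so the members are $x_0 F$ with $\deg F = d-1$, all reducible since $d \geqslant 2$, and $\dim \mathcal{M}_{\rm red} = \dim |\mathcal{O}_{\mathbb{P}^n}(d-1)| = r-1$. For part (c), since $\dim R + (m - r - n - 1) = m - 2 < m - 1$, a general linear subspace of $\mathbb{P}(V)$ of dimension $m-r-n-1$ is disjoint from the closed set $R$; such a subspace exists as $F$ is infinite, and I take $\mathcal{L}_{\rm irr}$ to be one — every member misses $R$, hence is irreducible. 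For part (d), if $\dim \mathcal{L} \geqslant m - r - n$ then $\dim \mathcal{L} + \dim R \geqslant m - 1$, and a linear subspace of $\mathbb{P}^{m-1}$ whose dimension plus that of a closed subvariety is at least $m-1$ must meet it (cut the subvariety with the $m - 1 - \dim \mathcal{L}$ hyperplanes defining $\mathcal{L}$, one at a time — here the count of hyperplanes equals $\dim R$); a point of $\mathcal{L} \cap R$ then gives a reducible member.

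Part (b) is where real work is needed, since over an algebraically closed field the intersection argument used for Theorem~\ref{thm:irreducible}(b) (with $\mathcal{L}_{\rm irr}$ of dimension $m - 1 - r$) is no longer available. I would show directly that $\dim \mathcal{L} \leqslant r - 1$ whenever every $F$-member of $\mathcal{L}$ is reducible. If $\mathcal{L}$ has a fixed component of degree $d_0 \geqslant 1$, factoring it off gives $\dim \mathcal{L} \leqslant \binom{n + d - d_0}{n} - 1 \leqslant r - 1$. Otherwise $\mathcal{L}$ is fixed-component-free; assume $\dim \mathcal{L} \geqslant 1$. If the generic member $f_\eta$ over $F(\mathcal{L})$ is reduced, then the general $F$-member is a reduced, reducible hypersurface, so by Bertini's irreducibility theorem the rational map $\phi_{\mathcal{L}} \colon \mathbb{P}^n \dashrightarrow \mathbb{P}^{\dim \mathcal{L}}$ defined by $\mathcal{L}$ cannot have $2$-dimensional image: its image $C$ is a nondegenerate curve, the members of $\mathcal{L}$ are pulled back from the hyperplane sections of $C$, so $\dim \mathcal{L} \leqslant h^0(C, \mathcal{O}_C(1)) - 1 \leqslant \deg C$, and since each member is a sum of $\deg C$ fibers of $\phi_{\mathcal{L}}$ (each of degree $\geqslant 1$) we get $\deg C \leqslant d$; finally $\dim \mathcal{L} \leqslant d \leqslant \binom{n+d-1}{n} - 1 = r-1$ for $n, d \geqslant 2$, an elementary inequality. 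If instead $f_\eta$ is non-reduced — which happens only in characteristic $p > 0$ — then either all $\partial_j f_\eta$ vanish, in which case every $F$-member lies in the $F$-span $V_d^{(p)}$ of the $p$-th-power monomials and $\dim \mathcal{L} \leqslant \binom{n + d/p}{n} - 1 \leqslant r - 1$; or $g_\eta \colonequals \gcd(f_\eta, \partial_0 f_\eta, \dots, \partial_n f_\eta)$ is a factor of $f_\eta$ of degree strictly between $0$ and $d$, defined over $F(\mathcal{L})$, and restricting to a general pencil $\langle f_0, f_1 \rangle$ in $\mathcal{L}$ (still fixed-component-free) makes $t \mapsto [g_t]$ and $t \mapsto [f_t / g_t]$ morphisms from $\mathbb{P}^1$ whose degrees sum to $1$ (because $\mu_e^{*}\mathcal{O}(1) = \mathcal{O}(1,1)$ and the pencil is linearly embedded), forcing one of them to be constant and hence a positive-degree fixed component of the pencil — a contradiction. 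Thus $\dim \mathcal{L} \leqslant r - 1$ in every case, proving (b).

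I expect the delicate point to be the characteristic-$p$ analysis in part (b): "reducible as a polynomial" is strictly weaker than "reducible as a variety" (the system $\langle x_0^p, \dots, x_n^p \rangle$ has every member reducible, yet $\phi_{\mathcal{L}}$ is the Frobenius, with $n$-dimensional image), so Bertini alone does not produce the curve $C$ there; isolating the inseparable part of $f_\eta$ through its Jacobian ideal, and making the pencil-degree computation rigorous despite possible inseparability of the factorization $f_t = g_t\,(f_t/g_t)$, is where the care will be needed. The rest — parts (a), (c), (d) and the reduced case of (b) — should be routine once the dimension $\dim R = n + r - 1$ and the finiteness of $\mu_e$ are in place.
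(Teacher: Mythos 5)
Your parts (a), (c) and (d) follow the paper's own route: the same hyperplane construction for (a), and for (c)--(d) the same reducible locus $\mathcal{R}=\bigcup_e \mathcal{R}_e\subset\mathbb{P}^{m-1}$ with $\dim\mathcal{R}=\dim\mathcal{R}_1=r+n-1$ (the paper gets maximality at $e=1$ from Lemma~\ref{lem:1001}(a), you from convexity of $e\mapsto\binom{n+e}{n}+\binom{n+d-e}{n}$ --- the same computation), followed by the standard intersection-dimension argument in $\mathbb{P}^{m-1}$. Part (b) is where you genuinely diverge. The paper splits according to whether the generic member is irreducible over $K=F(\lambda)$, kills the reducible case by Gauss's lemma (degree $1$ in $\lambda$ forces a $\lambda$-free factor, i.e.\ a fixed component), and in the irreducible case cites Schinzel \cite{Sch00}*{Ch.~3, Thm.~37} for the two possible maxima $d$ and $\binom{n+d/p}{n}-1$; you instead reprove the needed special case of Schinzel's theorem from scratch (Bertini in the composed-with-a-pencil case, giving $\dim\mathcal{L}\leqslant d$, and the vanishing-derivatives analysis in the Frobenius case, giving $\dim\mathcal{L}\leqslant\binom{n+d/p}{n}-1$), ending with the same elementary binomial inequalities. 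This buys self-containedness, but three points in your sketch need tightening. First, ``reduced'' must mean geometrically reduced, i.e.\ squarefree over $\overline{F(\lambda)}$: the Frobenius system $\langle x_0^p,\ldots,x_n^p\rangle$ has generic member irreducible, hence squarefree, over $F(\lambda)$, so under the naive reading it would fall into your ``reduced'' case, where your claim that the general $F$-member is reduced is false. Second, the Bertini step should be run at the generic member rather than at general $F$-members: since the reducible locus $\mathcal{R}$ is closed and the $F$-points of $\mathbb{P}(\mathcal{L})$ are dense, ``every $F$-member reducible'' forces $f_\eta$ to factor over $\overline{F(\lambda)}$; then the characteristic-free generic form of Bertini's irreducibility theorem (no fixed component and image of dimension $\geqslant 2$ imply the generic member is geometrically irreducible as a scheme; e.g.\ Jouanolou) together with geometric reducedness yields the contradiction and hence that the image is a curve. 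The general-member form of Bertini's second theorem that you quote is exactly the statement that fails in characteristic $p$ (your own Frobenius example), so it cannot be invoked as such, though with the generic-member formulation your conclusion is correct. Third, your pencil-degree argument in the subcase where $g_\eta=\gcd(f_\eta,\partial_0 f_\eta,\ldots,\partial_n f_\eta)$ is a proper factor is unnecessary: there $f_\eta$ is reducible over $F(\lambda)$, and the paper's Gauss's-lemma observation immediately produces a fixed component, contradicting your standing assumption. With these precisions your proof of (b) is correct and is independent of Schinzel's classification.
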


\begin{proof}
(a) The construction of $\mathcal{L}_{\rm red}$ in the proof of Theorem~\ref{thm:irreducible}(a) goes through over an arbitrary field.

\smallskip
(b) Let $\mathcal{L} = \langle f_0, \ldots, f_{t} \rangle$ of degree $d$ hypersurfaces in $\mathbb P^n$ defined over $F$,
Let \[ f_{\lambda}(x_0, \ldots, x_n) = \lambda_0 f_0 + \ldots + \lambda_{t} f_t \] be the member of this system corresponding to $\lambda = (\lambda_0: \ldots:\lambda_{t}) \in \mathbb P^t$.
Assume that every $F$-element of $\mathcal{L}$ is reducible, that is, $f_{\lambda}$ is a reducible polynomial in $F[x_0, \ldots, x_n]$ for every $F$-point
$\lambda = (\lambda_0: \ldots: \lambda_{t}) \in \mathbb P^t(F)$. Our goal is to show that $\dim(\mathcal{L}) \leqslant r-1$. Let us consider two cases.

\smallskip
Case 1: The generic member of $\mathcal{L}$ is irreducible. Here by the generic member we mean the member
coresponding to the generic point of $\mathbb P^t$.  Equivalently, $f_{\lambda}$ is irreducible as a polynomial 
in $x_0, \ldots, x_n$ over the field $F(\lambda_0, \ldots, \lambda_{t})$. 

A description of the polynomials $f_{\lambda}$ that may occur in this case can be found in Schinzel's book~\cite{Sch00}*{Chapter 3, Theorem 37}.  
It follows from this description that if $\operatorname{char}(F)$ does not divide $d$, then
the maximal dimension of $\mathcal{L}$ is $d$, and is achieved by 
the linear system $\langle x_1^d, x_1^{d-1}x_2, x_1^{d-2}x_2^2, \ldots, x_2^d \rangle$. 
On the other hand, if $\operatorname{char}(F)$ divides $d$, then the maximal dimension of $\mathcal{L}$ is either $d$,
attained in the same way as above) or $\binom{n+d/p}{n}-1$. The latter is achieved by the linear system spanned by all monomials of the form $x_0^{p  \, i_0}x_1^{p \, i_1}\cdots x_n^{p \, i_n}$ with $i_0 + \ldots + i_n = d/p$.

It remains to show that (i) $d \leqslant r-1$ and (ii) if $p \geqslant 2$ divides $d$, then $\binom{n + d/p}{n} \leqslant r$. 
By Pascal's identity, for a fixed $d$, $\binom{n + d - 1}{n}$ increases with $n$. In particular, since
$n \geqslant 2$, we have
\[ \frac{(d+1)d}{2} = \binom{2 + d - 1}{2} \leqslant \binom{n + d -1}{n} = r. \]
Since $d \geqslant 2$, this yields $d = (d+1) - 1\leqslant \frac{(d + 1)d}{2} -1 \leqslant  r -1$,
proving (i). To prove (ii), note that $d/p \leqslant d-1$. Thus
\[ \binom{n+d/p}{n} \leqslant \binom{n+ d - 1}{n} = r, \]
as desired. 

\smallskip
Case 2: The generic member of $\mathcal{L}$ is reducible. Equivalently, $f_{\lambda}$ is reducible as a polynomial 
in $x_0, \ldots, x_n$ over $F(\lambda_0, \ldots, \lambda_{t})$. Using Gauss' Lemma, and the fact that
$f_{\lambda}$ is homogeneous of degree $1$ in $\lambda_0, \ldots, \lambda_{t}$, we see that  
\[ f_{\lambda}(x_0, \ldots, x_n) = g(x_0,...,x_n)\cdot h_{\lambda}(x_0,...,x_n), \]
where $g \in F[x_0, \ldots, x_n]$ is a homogeneous polynomial of degree $d_1$, 
$h_{\lambda} = \lambda_0 h_0 + \ldots + \lambda_{t} h_{t}$ for some homogeneous
polynomials $h_0, \ldots, h_{t} \in F[x_0, \ldots, x_n]$ of degree $d_2 \geqslant 1$ and
$d_1 + d_2 = d$. Here $h_0, \ldots, h_t$ are linearly independent over $F$. Thus
\[ \dim(\mathcal{L}) = t \leqslant \binom{n + d_2}{n} -1 \leqslant \binom{n + d -1}{n} -1 = r -1. \]
This completes the proof of part (b).

\smallskip
To prove (c) and (d), let $\mathcal{R}$ be the locus of reducible hypersurfaces inside the parameter space $\mathbb P^{m-1}$ of all degree $d$ hypersurfaces in $\mathbb{P}^n$.  
Denote the dimension 
of $\mathcal{R}$ by $s$. Then every linear subspace of (projective) dimension $\geqslant m - 1 - s$ intersects $\mathcal{R}$ in $\mathbb{P}^{m-1}$; on the other hand, a linear subspace of (projective) dimension $< m - 1 - s$ in general position will \emph{not} meet $\mathcal{R}$ in $\mathbb{P}^{m-1}$. Since $F$ is algebraically closed, a nonempty intersection always has an $F$-point. 
In other words, the following are equivalent:

\smallskip
\begin{itemize}
\item
every linear system of (projective) dimension $t$ has a reducible $F$-member,  and  

\smallskip
\item
$t \geqslant m - 1 - s$. 
\end{itemize}

\smallskip 
It remains to show that 
\begin{equation} \label{e.dim-of-R}
s = r + n - 1;
\end{equation}
this immediately implies both (c) and (d). To prove~\eqref{e.dim-of-R}, note that $\displaystyle \mathcal{R} = \bigcup_{i =1}^{\lfloor d/2 \rfloor} \mathcal{R}_{i}$, where $\mathcal{R}_i$ consists of reducible hypersurfaces $F(x_0, \ldots, x_n) = 0$, where $F = F_1\cdot F_2=0$ and $F_1, \, F_2$ are homogeneous polynomials 
in $x_0, x_1, \ldots, x_n$ of degree $i$ and $d-i$, respectively. In other words, $\mathcal{R}_i$ is the image of the map $\mathbb{P}^{m_1- 1}\times  \mathbb{P}^{m_2-1} \to \mathbb{P}^{m - 1}$ given by $(F_1, F_2)\to F_1\cdot F_2$ where $m_1 = \binom{n+i}{n}$, $m_2=\binom{n+d-i}{n}$. It is easy to see that 
\[ \dim(\mathcal{R}_{i}) = \displaystyle \binom{n+i}{n}+\binom{n+d-i}{n}-2. \]
The difference $\dim(\mathcal R_{i}) - \dim(\mathcal R_{i+1})$ is exactly the quantity $N_{i+1} - N_i$ we considered at the beginning of Section~\ref{sec:comb}; 
see~\eqref{eq:N_i}. By Lemma~\ref{lem:1001}(a), $N_{i+1} - N_i \geqslant 0$ whenever $2(i +1) \leqslant d$. 
We conclude that $\dim(\mathcal R_i)$ assumes its maximal value when $i=1$. In other words,  
\[ s = \dim(\mathcal{R}) = \dim(\mathcal{R}_1) = \binom{n+1}{n}+\binom{n+d-1}{n}-2 = \binom{n+d-1}{n} + n - 1 = r+ n - 1, \]
as claimed. 
\end{proof}
%

\begin{remark} \label{rem.harmless}
Note that the assumption that $d \geqslant 2$ in Theorem~\ref{thm:irreducible} and Proposition~\ref{prop:irreducible} is harmless, since every hypersurface of degree $1$ in $\mathbb P^n$ is irreducible. Moreover, over an algebraically closed field, every hypersurface of degree $d \geqslant 2$ in $\mathbb P^1$ is reducible. Thus the assumption that
$n \geqslant 2$ in the statement of Proposition~\ref{prop:irreducible} is harmless as well.
\end{remark}

\begin{bibdiv}
\begin{biblist}

\bib{AGR23}{article}{
    AUTHOR = {Asgarli, Shamil},
    AUTHOR = {Ghioca, Dragos}, 
    AUTHOR = {Reichstein, Zinovy},
     TITLE = {Linear families of smooth hypersurfaces over finitely
              generated fields},
   JOURNAL = {Finite Fields Appl.},
    VOLUME = {87},
      YEAR = {2023},
     PAGES = {Paper No. 102169, 10},
      ISSN = {1071-5797,1090-2465},
}

\bib{AGY23}{article}{
    AUTHOR = {Asgarli, Shamil},
    AUTHOR = {Ghioca, Dragos}, 
    AUTHOR = {Yip, Chi Hoi},
     TITLE = {Existence of pencils with nonblocking hypersurfaces},
   JOURNAL = {Finite Fields Appl.},
    VOLUME = {92},
      YEAR = {2023},
     PAGES = {Paper No. 102283, 11},
}

\bib{CM06}{article}{
    AUTHOR = {Cafure, Antonio},
    AUTHOR = {Matera, Guillermo},
     TITLE = {Improved explicit estimates on the number of solutions of
              equations over a finite field},
   JOURNAL = {Finite Fields Appl.},
    VOLUME = {12},
      YEAR = {2006},
    NUMBER = {2},
     PAGES = {155--185},
      ISSN = {1071-5797,1090-2465},
}

\bib{MR98}{article}{
    AUTHOR = {Mercier, Dany-Jack},
    AUTHOR = {Rolland, Robert},
     TITLE = {Polyn\^{o}mes homog\`enes qui s'annulent sur l'espace projectif
              {${\rm P}^m({\bf F}_q)$}},
   JOURNAL = {J. Pure Appl. Algebra},
    VOLUME = {124},
      YEAR = {1998},
    NUMBER = {1-3},
     PAGES = {227--240},
      ISSN = {0022-4049},
}

\bib{LW54}{article}{
    AUTHOR = {Lang, S.},
Author = {Weil, A.},
     TITLE = {Number of points of varieties in finite fields},
   JOURNAL = {Amer. J. Math.},
    VOLUME = {76},
      YEAR = {1954},
     PAGES = {819--827},
      ISSN = {0002-9327},
}

\bib{Poo04}{article}{
    AUTHOR = {Poonen, Bjorn},
     TITLE = {Bertini theorems over finite fields},
   JOURNAL = {Ann. of Math. (2)},
    VOLUME = {160},
      YEAR = {2004},
    NUMBER = {3},
     PAGES = {1099--1127},
      ISSN = {0003-486X,1939-8980},
}

\bib{Sch00}{book}{
    AUTHOR = {Schinzel, Andrzej},
     TITLE = {Polynomials with special regard to reducibility},
    SERIES = {Encyclopedia of Mathematics and its Applications},
    VOLUME = {77},
      NOTE = {With an appendix by Umberto Zannier},
 PUBLISHER = {Cambridge University Press, Cambridge},
      YEAR = {2000},
     PAGES = {x+558},
      ISBN = {0-521-66225-7},
}

\bib{Ser91}{incollection}{
    AUTHOR = {Serre, Jean-Pierre},
     TITLE = {Lettre \`a {M}. {T}sfasman},
      NOTE = {Journ\'{e}es Arithm\'{e}tiques, 1989 (Luminy, 1989)},
   JOURNAL = {Ast\'{e}risque},
    NUMBER = {198-200},
      YEAR = {1991},
     PAGES = {11, 351--353 (1992)},
      ISSN = {0303-1179},
}

\bib{Shafarevich}{book}{
    AUTHOR = {Shafarevich, Igor R.},
     TITLE = {Basic algebraic geometry. 1},
   EDITION = {Second},
      NOTE = {Varieties in projective space,
              Translated from the 1988 Russian edition and with notes by
              Miles Reid},
 PUBLISHER = {Springer-Verlag, Berlin},
      YEAR = {1994},
     PAGES = {xx+303},
      ISBN = {3-540-54812-2},
}

\end{biblist}
\end{bibdiv}

\end{document}